\DeclareFontFamily{OML}{rsfs}{\skewchar\font'177}
\DeclareFontShape{OML}{rsfs}{m}{n}{ <5> <6> rsfs5 <7> <8> <9> rsfs7
  <10> <10.95> <12> <14.4> <17.28> <20.74> <24.88> rsfs10 }{}
\DeclareMathAlphabet{\mathfs}{OML}{rsfs}{m}{n}
\newcommand{\BN}{{\mathbb{N}}}
\newcommand{\BZ}{{\mathbb{Z}}}
\newcommand{\CN}{{\mathcal{N}}}
\newcommand{\ind}{{\mathbbm{1}}}
\newcommand{\prob}{{\bf P}}
\newcommand{\bae}{\begin{equation}\begin{aligned}}
\newcommand{\eae}{\end{aligned}\end{equation}}
\newcommand{\ev}{\mathbf{E}}
\newcommand{\pr}{\mathbf{P}}
\newcommand{\Z}{\mathbb{Z}}
\newcommand{\om}{{\omega}}
\newtheorem{thm}{Theorem}[section]
\newtheorem{prop}[thm]{Proposition}
\newtheorem{lem}[thm]{Lemma}
\newtheorem{claim}[thm]{Claim}
\newtheorem{cor}[thm]{Corollary}
\newtheorem{definition}{Definition}[section]
\newtheorem{rmk}{Remark}
\begin{document}

\numberwithin{equation}{section}
\numberwithin{figure}{section}
\title{Mutually excited random walks}
\author{Noam Berger\footnote{Hebrew University of Jerusalem and Technische Universit\"at M\"unchen}, Eviatar B. Procaccia\footnote{Weizmann Institute of Science}\footnote{Research supported by ISF grant 1300/08 and EU grant PIRG04-GA-2008-239317 }}
\maketitle
\abstract{Consider two random walks on $\mathbb{Z}$. The transition
probabilities of each walk is dependent on trajectory of
the other walker i.e. a drift $p>1/2$ is obtained in a position the other walker visited twice or more. This simple model has a speed which is, according to simulations, not monotone in $p$, without apparent ``trap" behaviour. In this paper we prove the process has positive speed for $1/2<p<1$, and present a deterministic algorithm to approximate the speed and show the non-monotonicity. }

\section{Introduction}
Excited random walk or "cookie motion" is a well known model in probability theory introduced by Benjamini and Wilson in \cite{benjamini2003excited}. A random walk on $\Z^d$ is excited if the first time it visits a vertex there is a bias in one direction,
but on subsequent visits to that vertex the walker picks a neighbor uniformly at random. Benjamini and Wilson proved that excited random walk on $\Z^d$ is transient iff $d > 1$. Benjamini and wilson also proved that for $d\geq4$ (later proved by Gadi Kozma for $d\le2$ \cite{kozma2005excited}) that excited random walk has linear speed. That is
\[
\liminf_{n\rightarrow\infty}\frac{X(n)_1}{n}>0,
\]
where $X(n)$ is the position of the walk at time $n$ and $X(n)_1$ is the projection of $X(n)$ to the first coordinate (the direction of the bias). Kozma proved in \cite{kozma2003excited} that in three dimensions excited random walk has positive speed. That is
\[
\lim_{n\rightarrow\infty}\frac{X(n)}{n}>0
.\]

Martin P. W. Zerner defined in \cite{ZERMERW} a generalization of the Benjamini-Wilson model. Zerner defines a cookie environment as an element \[\omega=((\omega(z,i))_{i>0})_{z\in\Z}\in\Omega_+=\left([1/2,1]^\mathbb{N}\right)^\Z,\]where $\omega(z,i)$ is the probability for the random walk to jump from $z$ to $z+1$ if it is currently
visiting $z$ for the $i$-th time. In that paper Zerner proved monotonicity of hitting times with respect to the environment (Lemma 15 of \cite{ZERMERW}), i.e let $\omega_1,\omega_2\in\Omega_+$ with $\omega_1\leq\omega_2$ and $-\infty\leq x\leq y\leq z\leq \infty$, $y\in\Z$, $x,z\in\Z\cup\{\infty\}$ and $t\in\mathbb{N}\cup\{\infty\}$. Then
\[
\pr_{y,\omega_1}[T_z\leq T_x\wedge t]\leq\pr_{y,\omega_2}[T_z\leq T_x\wedge t]
.\]
Using the above lemma, Zerner proved monotonicity of speed with respect to the environment (Theorem 17 of \cite{ZERMERW}), i.e let $\overline{\pr}$ be a stationary and ergodic probability measure on $\Omega_+^2$ such that $\overline{\pr}(\omega_1\leq\omega_2)=1$ then $v_1\leq v_2$, where $v_i$ is the $\pr_{0,\omega_i}$ a.s limit of $\frac{X_n}{n}$, $i\in\{1,2\}$, under the assumption that both limits exist and are a.s constants.

Recently Itai Benjamini proposed a new model. Consider two random walks $(X_n,Y_n)$ on $\mathbb{Z}$ each
eating its own favorite kind of cookie. The transition
probabilities of each walk is dependent on the number of cookies
the other walker has eaten. The cookie environment is an element
\bae{\omega_1\choose\omega_2}\in\left[\left(\left[\frac{1}{2},1\right]^{\mathbb{N}\cup\{0\}}\right)^\mathbb{Z}\right]^2\eae
satisfying
\bae{\omega_1(z)\choose\omega_2(z)}={(\frac{1}{2},\frac{1}{2},p,p,p,...)\choose
(\frac{1}{2},\frac{1}{2},p,p,p,...)}\eae

Define $\mathfs{F}_n=\sigma\left(X_{0\leq m\leq n},Y_{0\leq m\leq
n}\right)$, the mutually excited random walk (MERW) is defined by
    \bae\label{eq:modelrule}
    \pr[X_{n+1}=X_n+1|\mathfs{F}_n]&=\omega_1(X_n,\#\{m\leq n:Y_m=X_n\})\\
    \pr[X_{n+1}=X_n-1|\mathfs{F}_n]&=1-\omega_1(X_n,\#\{m\leq
    n:Y_m=X_n\})\\
    \pr[Y_{n+1}=Y_n+1|\mathfs{F}_n]&=\omega_2(Y_n,\#\{m\leq n:X_m=Y_n\})\\
    \pr[Y_{n+1}=Y_n-1|\mathfs{F}_n]&=1-\omega_2(Y_n,\#\{m\leq
    n:X_m=Y_n\})
    \eae

The main interest in Benjamini's model arises from the fact that simulation results indicate that the limiting speed is a non monotone function of the drift parameter $p$ (see Figure \ref{fig:merwspeed}, created by computer simulation). This stands in contrast to Zerner's result that for self excited random walk the limiting speed is monotone.

In this paper, we prove that a MERW is transient and has positive speed for all $p\in(1/2,1]$ (Theorem \ref{thm:converspeed}). Using approximation by a family of Markov processes we prove that $v(p)$ is continuous in $[1/2,1)$ (Lemma \ref{lem:vkcon}),
where $v(p)$ is the speed of a MERW with drift $p$. We also present a scheme for a computer assisted proof of the non monotonicity.
At present time we do not have a conceptual proof for the non monotonicity in this model.
\begin{figure}
\begin{center}
\includegraphics[width=0.5\textwidth]{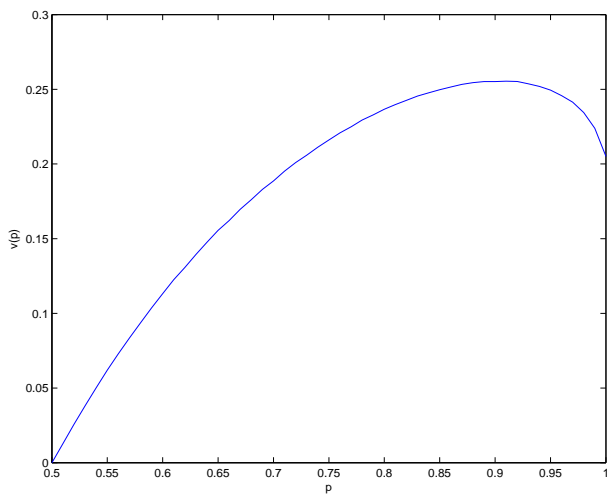}
\caption{Simulation results for the speed of MERW.\label{fig:merwspeed}}
\end{center}
\end{figure}

\section{RWRE preliminaries}
\subsection{RWRE facts}
In this section we give a short introduction, based on \cite{ZEI}, to one dimensional random walk in random environment (RWRE) and state its use in this paper. Let $\mathfs{P}$ be the interval $[0,1]$ and set $\Omega=\mathfs{P}^\Z$. We endow $\Omega$ with the product $\sigma$-algebra and a product measure $\mathbb{P}$ (i.i.d RWRE). For any environment $\omega\in\Omega$ we define a $\Z$ valued Markov process $\{\xi_n\}_{n\geq0}$ endowed with the quenched measure $\pr_{z,\omega}$ satisfying,
\bae
\pr_{z,\omega}(\xi_0=z)&=1\\
\pr_{z,\omega}(\xi_{n+1}=\xi_n+1|\xi_0,\xi_1,\ldots,\xi_n)&=\omega(\xi_n)\quad \pr_{z,\omega}{\rm -a.s}\\
\pr_{z,\omega}(\xi_{n+1}=\xi_n-1|\xi_0,\xi_1,\ldots,\xi_n)&=1-\omega(\xi_n)\quad \pr_{z,\omega}{\rm -a.s}.
\eae
The annealed measure is defined on $\Omega\times \Z^\mathbb{N}$ by $\pr_z(\cdot)=\mathbb{E}[\pr_{z,\omega}(\cdot)]$. Define $\rho_x=\frac{1-\omega(x)}{\omega(x)}$, assume $\ev(\log\rho_0)$ is well defined and $1>\omega(x)>0$.

The following theorem is a special case of Theorem 2.1.9 of \cite{ZEI}.
\begin{thm}\label{thm:rwrespeed} If $\ev[\rho_0]<1$ then
\[
\lim_{n\rightarrow\infty}\frac{\xi_n}{n}=\frac{1-\ev[\rho_0]}{\ev\left[\frac{1}{\omega_0}\right]}
.\]
\end{thm}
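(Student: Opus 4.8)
The plan is to reproduce Solomon's classical first-passage argument: express the inverse speed as the mean time the walk needs to advance one level, compute that mean explicitly, and pass to the limit via the ergodic theorem. First I would record that the hypothesis $\ev[\rho_0]<1$ forces transience to $+\infty$. By concavity of the logarithm (Jensen's inequality) $\ev[\log\rho_0]\le\log\ev[\rho_0]<0$, which is the standard criterion guaranteeing $\xi_n\to+\infty$ a.s. Consequently the level-hitting times $T_n=\inf\{k:\xi_k=n\}$ are finite a.s., and since the walk moves in unit steps they are strictly increasing in $n$.

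Next I would decompose $T_n=\sum_{i=1}^n\tau_i$ with $\tau_i=T_i-T_{i-1}$ the first-passage time from $i-1$ to $i$. The structural point is that at time $T_{i-1}$ the walk sits at $i-1$ having never been strictly to its right (to reach $i$ one must first cross $i-1$); hence by the strong Markov property $\tau_i$ has, under the quenched law, the same distribution as the passage time from $0$ to $1$ in the environment shifted by $i-1$. Because the environment is i.i.d., the shift on $\Omega$ is ergodic, and it follows that under the annealed measure $\pr_0$ the sequence $(\tau_i)_{i\ge1}$, though not independent, is stationary and ergodic.

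To identify $\ev[\tau_1]$ I would compute the quenched mean crossing time. Conditioning on the first step from site $j$ (one step is always spent, and with probability $1-\omega_j$ the walk is sent to $j-1$ and must recross) gives, for $m_j$ the quenched expected passage time from $j$ to $j+1$, the recursion
\[ m_j=\frac{1}{\omega_j}+\rho_j\, m_{j-1}. \]
Taking annealed expectations and using that $\rho_j$ is independent of $m_{j-1}$ (the latter depends only on the environment at sites $\le j-1$), together with stationarity $\ev[m_j]=\ev[m_{j-1}]=:\bar m$, yields
\[ \bar m=\ev\!\left[\tfrac{1}{\omega_0}\right]+\ev[\rho_0]\,\bar m, \qquad\text{hence}\qquad \bar m=\frac{\ev[1/\omega_0]}{1-\ev[\rho_0]}. \]
The ergodic theorem then gives $T_n/n\to\ev[\tau_1]=\bar m$ a.s., and inverting (sandwiching $n$ between $T_{M_n}$ and $T_{M_n+1}$ for $M_n=\max_{k\le n}\xi_k$, and using transience to control the gap between $\xi_n$ and $M_n$) produces $\xi_n/n\to1/\bar m=(1-\ev[\rho_0])/\ev[1/\omega_0]$.

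I expect the main obstacle to be the integrability and convergence issues underlying the ergodic step rather than the algebra. One must check that $\ev[\tau_1]=\bar m<\infty$, equivalently that iterating the recursion produces a convergent series, which is precisely where the sharp condition $\ev[\rho_0]<1$ (not merely $\ev[\log\rho_0]<0$) is used, and one must justify interchanging expectation with the infinite recursion. Establishing that the non-independent crossing-time sequence is genuinely stationary and ergodic also requires care, although it ultimately reduces to the ergodicity of the i.i.d. environment under the shift.
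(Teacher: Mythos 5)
Your argument is correct and is essentially the proof of the result the paper itself does not prove but merely cites (Theorem 2.1.9 of Zeitouni's notes, i.e.\ Solomon's theorem): the hitting-time decomposition $T_n=\sum_i\tau_i$, the recursion $m_j=\tfrac{1}{\omega_j}+\rho_j m_{j-1}$ solved by independence and stationarity, and the ergodic-theorem inversion are exactly the standard route. Your worry about finiteness of $\bar m$ resolves itself since $\ev[1/\omega_0]=1+\ev[\rho_0]<2$, so the geometric series obtained by iterating the recursion converges under the stated hypothesis.
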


\begin{lem}\label{lem:rwrebacktrack}
Let $\xi_n$ be an i.i.d RWRE starting at $0$, then the annealed probability of backtracking $k$ steps before reaching $+1$ is smaller than $2(\ev\rho_0)^k$.
\end{lem}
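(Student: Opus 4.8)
The plan is to reduce the annealed estimate to a deterministic (quenched) gambler's-ruin identity in a fixed environment, and then to average over the environment. Fix $\omega$ and, for the chain $\{\xi_n\}$ under $\pr_{0,\omega}$, let $T_m=\inf\{n\ge 0:\xi_n=m\}$ be the hitting time of $m$; the event ``backtrack $k$ steps before reaching $+1$'' is $\{T_{-k}<T_1\}$. First I would write down the quenched probability via the classical one-dimensional ruin formula on $\{-k,\dots,1\}$ with the two endpoints absorbing. Setting $u(x)=\pr_{x,\omega}[T_{-k}<T_1]$, the Markov property applied at the first step gives the discrete-harmonic recursion $u(x)=\omega(x)\,u(x+1)+(1-\omega(x))\,u(x-1)$ with boundary values $u(-k)=1$, $u(1)=0$. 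Writing the increments $D(x)=u(x)-u(x-1)$, this rearranges to $D(x+1)=\rho_x D(x)$, so the $D$'s form a geometric-type product in the $\rho_i$; telescoping the increments and matching the two boundary values yields the closed form
\[
\pr_{0,\omega}[T_{-k}<T_1]=\frac{\prod_{i=-k+1}^{0}\rho_i}{\sum_{x=-k+1}^{1}\ \prod_{i=-k+1}^{x-1}\rho_i}.
\]

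The key simplification comes next: the term $x=-k+1$ in the denominator is an empty product equal to $1$, and since the standing assumption $0<\omega(x)<1$ forces every $\rho_i>0$, the denominator is strictly larger than $1$. Dropping it gives the clean quenched bound
\[
\pr_{0,\omega}[T_{-k}<T_1]<\prod_{i=-k+1}^{0}\rho_i ,
\]
a product of $\rho$ over the $k$ \emph{distinct} sites $-k+1,\dots,0$.

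Finally I would anneal. Because $\mathbb{P}$ is a product (i.i.d.) measure, the variables $\rho_{-k+1},\dots,\rho_0$ are independent, so the expectation of their product factorizes:
\[
\pr_0[T_{-k}<T_1]=\ev\big[\pr_{0,\omega}[T_{-k}<T_1]\big]<\ev\Big[\prod_{i=-k+1}^{0}\rho_i\Big]=(\ev\rho_0)^{k}\le 2(\ev\rho_0)^{k},
\]
which is the assertion. The computation is essentially routine; the only points demanding care are the index bookkeeping in the ruin formula and the observation that discarding the denominator is legitimate (it is $\ge 1$). The one genuinely probabilistic ingredient is the i.i.d. hypothesis, which is exactly what lets the annealed expectation of the product split into a product of expectations—without independence one would be left with $\ev[\prod_i\rho_i]$, which need not equal $(\ev\rho_0)^k$. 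I do not expect a serious obstacle here; note in particular that the factor $2$ in the statement is not tight, the argument delivering the stronger bound $(\ev\rho_0)^k$ and the $2$ merely providing convenient slack.
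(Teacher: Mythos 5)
Your proof is correct and follows essentially the same route as the paper: both arguments telescope the harmonic recursion $u(x)=\omega(x)u(x+1)+(1-\omega(x))u(x-1)$ to get the quenched bound $\pr_{0,\omega}[T_{-k}<T_1]\le\prod_{i=-k+1}^{0}\rho_i$ and then factorize the annealed expectation using independence. Your version is slightly more explicit (writing the full ruin formula and bounding the denominator below by $1$, rather than bounding $1-v_\omega(-(k-1))\le 1$ directly), and like the paper's it actually yields the sharper constant $1$ in place of $2$.
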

\begin{proof}
Let $\tau_i=\inf\{n|\xi_n=i\}$ and $v_\omega(z)=\pr_{z,\omega}(\tau_{-k}<\tau_1)$. We can obtain a recursion relation \cite{ZEI}
\bae\label{eq:reqrelatonrwre}
v_\omega(z)&=\omega_zv_\omega(z+1)+(1-\omega_z)v_\omega(z-1)\\
v_\omega(1)&=0,~v_\omega(-k)=1.
\eae
Iterating equation \eqref{eq:reqrelatonrwre}, yields
\bae
v_\omega(1)-v_\omega(0)&=\rho_0(v_\omega(0)-v_\omega(-1))=\rho_0\rho_{-1}[v_\omega(-1)-v_\omega(-2)]\\
&=\rho_0\rho_{-1}\cdots\rho_{-(k-1)}[v_\omega(-(k-1))-v_\omega(-k)]
\eae
Taking expectation over the environments and using independence we obtain
\[
v(0)=\ev v_\omega(0)=\pr_0(\tau_{-k}<\tau_1)\leq2(\ev\rho_0)^k
 \]
\end{proof}
\begin{cor}\label{cor:backtracking}
Let $\xi_n$ be an i.i.d RWRE starting at $0$. Then
\bae
\ev\left[\left(1-\pr_{z,\omega}(\tau_{-k}<\tau_1)\right)^n\right]\ge\left(1-2(\ev\rho_0)^k\right)^n.
\eae
\end{cor}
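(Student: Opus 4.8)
The plan is to derive this lower bound from Lemma~\ref{lem:rwrebacktrack} by two elementary convexity/monotonicity facts, treating the random variable $f(\omega):=\pr_{z,\omega}(\tau_{-k}<\tau_1)\in[0,1]$ as the only environment-dependent object in sight. Lemma~\ref{lem:rwrebacktrack} already controls its annealed mean: taking expectation over $\omega$ gives $\ev[f]=\pr_0(\tau_{-k}<\tau_1)\le 2(\ev\rho_0)^k$. Since the target quantity is $\ev[(1-f)^n]$, the entire argument reduces to pushing this mean bound through the map $x\mapsto(1-x)^n$.

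First I would apply Jensen's inequality. The function $g(x)=(1-x)^n$ satisfies $g''(x)=n(n-1)(1-x)^{n-2}\ge0$ on $[0,1]$ for every $n\ge1$, hence $g$ is convex there; because $f\in[0,1]$ almost surely, Jensen yields $\ev[(1-f)^n]=\ev[g(f)]\ge g(\ev[f])=(1-\ev[f])^n$. Second I would use monotonicity to replace $\ev[f]$ by its upper bound: on $[0,1]$ the map $x\mapsto(1-x)^n$ is nonincreasing, so from $\ev[f]\le 2(\ev\rho_0)^k$ one gets $(1-\ev[f])^n\ge(1-2(\ev\rho_0)^k)^n$. Chaining this with the Jensen step gives the claimed inequality.

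The only point requiring care — and essentially the sole obstacle — is the monotonicity step, which needs $1-2(\ev\rho_0)^k\ge0$ so that raising both sides to the $n$-th power preserves the direction of the inequality. This is precisely the regime in which the corollary is invoked: when $\ev\rho_0<1$ and $k$ is taken large enough that $2(\ev\rho_0)^k<1$, one has $1-2(\ev\rho_0)^k\in(0,1]$ and the step goes through without further comment. I would therefore state and prove the bound under this understanding, noting that for $n$ odd the inequality is in any case trivial since the left-hand side is nonnegative while the right-hand side is nonpositive once $2(\ev\rho_0)^k\ge1$.

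Overall I expect this to be a short computation rather than a deep argument: the substance is entirely contained in Lemma~\ref{lem:rwrebacktrack}, and the corollary merely repackages the mean estimate into a product form convenient for the later arguments where one presumably wants, via independence over disjoint environment blocks, a lower bound on the probability that none of $n$ independent backtracking attempts succeeds.
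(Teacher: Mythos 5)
Your proof is correct and takes the same route as the paper, whose entire proof of this corollary is the single sentence ``The inequality is Jensen's inequality.'' You have simply filled in the details the paper leaves implicit — convexity of $x\mapsto(1-x)^n$ on $[0,1]$, the mean bound from Lemma~\ref{lem:rwrebacktrack}, and the (correctly flagged) requirement $2(\ev\rho_0)^k\le 1$ for the final monotonicity step.
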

\begin{proof}
The inequality is Jensen's inequality.
\end{proof}
Another useful result in RWRE we will use in this paper is that large deviation probabilities for environments with only positive and zero drifts decay like $e^{-Cn^{1/3}}$. The following theorem, which is Theorem 1.2 of \cite{dembo1996tail}, states this precisely.
\begin{thm}\label{thm:poszerodrift_ldp}
Suppose that $\ev[\rho]<1$, but $\rho_{\text{max}}=1$ and $\pr(\omega_0=\frac{1}{2})>0$. Then for any open $G\subset(0,v_\alpha)$ which is separated from $v_\alpha$
\[
-\infty<\liminf_{n\rightarrow\infty}n^{-1/3}\log\pr(n^{-1}X_n\in G)\leq\limsup_{n\rightarrow\infty}n^{-1/3}\log\pr(n^{-1}X_n\in G)<0.
\]
\end{thm}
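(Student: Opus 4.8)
The plan is to prove both inequalities through the same \emph{trap} picture: to travel at a speed $v<v_\alpha$ the walk must waste a time of order $n$, and under the hypotheses $\ev[\rho]<1$, $\rho_{\text{max}}=1$, $\pr(\omega_0=\tfrac12)>0$ the cheapest way to waste a time $t$ is to sit inside a flat (zero-drift) block whose length is chosen to balance the cost of the environment producing that block against the cost of the walk lingering in it. Since $\rho_{\text{max}}=1$, the potential $\sum_{0<i\le x}\log\rho_i$ is non-increasing, so there are no reflecting ``uphill'' traps and flat blocks are the only delaying mechanism; this is exactly what turns the slowdown rate from polynomial (the $\rho_{\text{max}}>1$ case) into the stretched-exponential $n^{1/3}$. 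As a first step I would reduce the two-sided event to a hitting-time estimate: writing $v=\sup G<v_\alpha$ and $m=\lceil vn\rceil+\lceil n^{1/3}\rceil$, ballisticity (Theorem \ref{thm:rwrespeed}) gives $T_m/m\to 1/v_\alpha<1/v$, while Lemma \ref{lem:rwrebacktrack} bounds the probability that the walk reaches $m$ and then backtracks below $vn$ by $\le n\,e^{-cn^{1/3}}$; hence $\pr(n^{-1}X_n\in G)\le\pr(n^{-1}X_n\le v)\le \pr(T_m\ge n)+n\,e^{-cn^{1/3}}$, and the whole problem becomes controlling $\pr(T_m\ge n)$, a deviation of the hitting time by $\Delta\asymp n$ above its mean $m/v_\alpha$.

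\textbf{Lower bound} ($\liminf n^{-1/3}\log\pr>-\infty$). Here I would simply exhibit the optimal trap. Fix $v'\in G$ and $\ell=\lfloor a\,n^{1/3}\rfloor$, and condition on the block $\{0,\dots,\ell\}$ being entirely flat ($\omega\equiv\tfrac12$), an event of probability $\pr(\omega_0=\tfrac12)^{\,\ell+1}=e^{-c_1 n^{1/3}}$. On this event force the walk to remain in $\{0,\dots,\ell\}$ for a time $\tau\asymp n(v_\alpha-v')/v_\alpha$; by the spectral-gap/confinement estimate for simple random walk on a segment of length $\ell$ this has probability $\ge e^{-c_2\tau/\ell^2}=e^{-c_3 n^{1/3}}$. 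Finally let the walk exit to the right and run ballistically at speed $v_\alpha$ for the remaining time, which by Theorem \ref{thm:rwrespeed} places $n^{-1}X_n$ inside $G$ with probability bounded below uniformly in $n$. Multiplying the three factors gives $\pr(n^{-1}X_n\in G)\ge e^{-Cn^{1/3}}$.

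\textbf{Upper bound} ($\limsup n^{-1/3}\log\pr<0$), the hard direction. The crux is a tail bound for the time $\hat\tau$ needed to cross a single block: $\pr(\hat\tau>t)\le C\,e^{-c\,t^{1/3}}$, which I would obtain by a quenched-then-annealed argument. Given the environment, the probability that the walk fails to cross a flat block of length $L$ within time $t$ is at most $\mathrm{(poly)}\cdot e^{-c\,t/L^2}$ by the principal-eigenvalue bound for simple random walk on a segment; this holds uniformly over environments because, when $\rho_{\text{max}}=1$, any strictly downhill stretch only makes the crossing faster, so the flat block is the worst case. The annealed cost of the environment carrying a flat block of length $L$ is $\le e^{-c'L}$ using $\pr(\omega_0=\tfrac12)>0$. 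Optimizing $e^{-ct/L^2-c'L}$ over $L$ gives its maximum at $L\asymp t^{1/3}$ and value $e^{-c''t^{1/3}}$. I would then pass to regeneration times so that successive block-crossing times are genuinely i.i.d.\ and apply a one-big-jump bound for sums with stretched-exponential tails, giving $\pr(T_m\ge m/v_\alpha+\Delta)\le C\,m\,e^{-c\Delta^{1/3}}$; with $m,\Delta\asymp n$ the bound is $C\,n\,e^{-cn^{1/3}}$, whose polynomial prefactor is absorbed on the $n^{-1/3}\log$ scale.

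\textbf{Main obstacle.} The principal difficulty is the upper bound, and within it the \emph{uniform} quenched confinement estimate together with the optimization over trap length: one must show that no environment, and no joint behaviour of environment and walk, can delay a crossing by time $t$ more cheaply than $e^{-ct^{1/3}}$, which is precisely where the hypothesis $\rho_{\text{max}}=1$ is needed to exclude cheaper reflecting traps. Controlling the weak dependence between successive crossings (via regeneration times) and making the single-big-jump step rigorous for stretched-exponential tails are the remaining technical points.
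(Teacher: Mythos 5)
The first thing to say is that the paper does not prove this statement at all: it is quoted verbatim as Theorem 1.2 of the cited reference \cite{dembo1996tail} (Dembo--Peres--Zeitouni), so there is no in-paper proof to compare against. Judged against the actual argument in that reference, your sketch identifies the correct mechanism: when $\rho_{\text{max}}=1$ the potential has no uphill pieces, so the only way to lose a time $t$ is to linger in flat stretches, and balancing the environmental cost $e^{-cL}$ of a flat stretch of length $L$ against the confinement cost $e^{-ct/L^2}$ gives $L\asymp t^{1/3}$ and the rate $t^{1/3}$. Your reduction to a hitting-time tail via the backtracking lemma is sound, and your lower bound (plant one flat block of length $\asymp n^{1/3}$, confine for time $\asymp n$, then run ballistically into $G$) is essentially the construction used there and is complete up to routine details.

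For the upper bound your outline is viable but the step you compress into ``apply a one-big-jump bound for sums with stretched-exponential tails'' is exactly where all the work sits, and as stated it is a gap. With $m\asymp\Delta\asymp n$ summands one must rule out not only a single huge crossing time but also many medium ones: $k$ traps each of optimal length $\asymp(n/k)^{1/3}$ cost $e^{-ck^{2/3}n^{1/3}}$, so the combinatorics do favor a single trap, but this must be verified jointly with the contribution of sub-optimally long stretches, and a bare citation of the single-big-jump principle does not do this. The clean route, and essentially what \cite{dembo1996tail} does, is a Chernoff bound $\pr(T_m\ge n)\le e^{-\lambda n}\,\ev\bigl[e^{\lambda T_m}\bigr]$ with $\lambda\asymp n^{-2/3}$, evaluated on the event that no flat stretch in $[0,m]$ exceeds $An^{1/3}$ (whose complement already costs $e^{-cn^{1/3}}$); the quenched exponential moment of a crossing time is finite precisely when $\lambda\lesssim L^{-2}$ for the longest flat stretch $L$, which is what forces $\lambda\asymp n^{-2/3}$ and produces the exponent $1/3$. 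You also rely implicitly on two facts worth making explicit: stochastic monotonicity of crossing times in $\omega$ (so that the flat block really is the worst case --- available here via Zerner's Lemma 15 since all $\omega\ge 1/2$), and the fact that annealed regeneration blocks are i.i.d.\ only after the standard tilting by the no-backtracking event. None of this is a wrong turn, but your proposal is a strategy statement rather than a proof at precisely the point where the exponent $1/3$ has to be protected.
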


\subsection{Applications of RWRE for MERW}
\begin{definition} We define the right front and the left front of the MERW by \bae
M_X(n)&=\max\{X_m:1\leq m\leq n\}\\
M_Y(n)&=\max\{Y_m:1\leq m\leq n\}\\
R_n&=\max\{M_X(n),M_Y(n)\}\\
L_n&=\min\{M_X(n),M_Y(n)\} \eae
The particle associated to the right front at time $n$ is $X_n$ if $R_n=M_X(n)$ and it is $Y_n$ if $R_n=M_Y(n)$. If $R_{n}=M_X(n)=M_Y(n)$ we choose $X_n$ to be the right front arbitrarily.
\end{definition}
The main use of RWRE in our paper is captured in the next lemma. The next lemma shows that the number of doubly eaten cookies the particle associated to the left front sees, stochastically dominates an i.i.d random environment.
\begin{definition}
Let $\xi:\Z\rightarrow\{1/2,p\}$ be defined as follows: for every $x\in\Z$ there exists the first time $n$ such that $L_n=x+1$. If at time $n$ both walks ate the cookies in position $x$, $\xi(x)=p$, else $\xi(x)=1/2$.
We call $\xi$ the intersection environment. \end{definition}
\begin{definition}
Let $\om$ and $\om'$ be to environment defined on the same space. We say $\om$ dominates $\om'$ is there is a coupling $Q$ on the product space such that $Q(\om\ge \om')=1$, where $\om\ge\om'$ stands for point wise domination.
\end{definition}
\begin{lem}\label{lem:environmentdef}
The intersection environment $\xi$ dominates an i.i.d random environment with
\[
\omega(x)=\left\{\begin{array}{ll}
                        \frac{1}{2}~w.p~1-(1-p)^2\\
                        p~w.p~(1-p)^2
                \end{array}\right.
.\]
\end{lem}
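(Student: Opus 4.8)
The plan is to prove the domination not by quoting an abstract domination theorem but by building the coupling $Q$ explicitly on an enlarged probability space, after isolating two structural facts. First, at \emph{every} site and \emph{every} time the conditional probability that a given walk steps to the left equals $1-\omega\ge 1-p$, simply because $\omega$ only takes the values $\tfrac12$ and $p$ with $p\ge\tfrac12$. Second, writing $\tau_x$ for the first time $n$ with $L_n=x+1$ (the time at which $\xi(x)$ is decided), both walks have already reached $x+1$ by time $\tau_x$, since $L_{\tau_x}=x+1=\min\{M_X(\tau_x),M_Y(\tau_x)\}$ forces both $M_X(\tau_x)\ge x+1$ and $M_Y(\tau_x)\ge x+1$. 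This second fact is what lets me treat $X$ and $Y$ symmetrically without ever tracking which walk is the front.

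Next I would define the good event at $x$. For a site $x$ to the right of the starting points, each walk first reaches $x$ by stepping onto it from $x-1$; let $t_X(x),t_Y(x)$ be the two first-hitting times of $x$, and let $G_x$ be the event that each walk steps back to $x-1$ immediately after its first visit to $x$. I claim $G_x\subseteq\{\xi(x)=p\}$: if a walk backtracks to $x-1$ at its first visit to $x$, then, since it must reach $x+1$ by time $\tau_x$ (fact two), it has to cross $x$ a second time strictly before $\tau_x$, producing a second visit to $x$. Applying this to both walks shows that by time $\tau_x$ each walk has visited $x$ at least twice, i.e.\ both cookies at $x$ are doubly eaten, so $\xi(x)=p$. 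The argument is symmetric in $X$ and $Y$ and is insensitive to the two walks swapping the roles of leader and follower in between.

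The core of the domination is then a coupling that converts the first fact into an i.i.d.\ lower field. Enlarging the space, at each time $n$ and for each walk I would use an independent auxiliary uniform to carve out of the left-step event (of conditional probability $\ge 1-p$ given $\mathfs{F}_n$) a sub-event of conditional probability exactly $1-p$, with indicator $b^X_n$, respectively $b^Y_n$. Because each such indicator has conditional probability exactly $1-p$ given the past, the family $\{b^X_n,b^Y_n\}_n$ is i.i.d.\ $\mathrm{Bernoulli}(1-p)$, and by construction $b^X_n=1$ forces $X$ to step left at time $n$ (and similarly for $Y$). Setting $\beta_x=b^X_{t_X(x)}\,b^Y_{t_Y(x)}$, distinct sites consume disjoint pairs of these underlying variables (a walk occupies one site at a time, so its first-hitting times of different sites are distinct), whence $\{\beta_x\}_x$ is i.i.d.\ $\mathrm{Bernoulli}((1-p)^2)$; and $\beta_x=1$ implies $G_x$, hence $\xi(x)=p$. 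Declaring $\omega(x)=p$ when $\beta_x=1$ and $\omega(x)=\tfrac12$ otherwise produces exactly the i.i.d.\ environment in the statement, dominated pointwise by $\xi$.

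The step I expect to be the main obstacle is making this coupling rigorous rather than heuristic: I must produce the auxiliary indicators with conditional probability \emph{exactly} $1-p$ uniformly over the history (a randomized splitting that has to be carried out measurably with respect to the joint filtration $\mathfs{F}_n$), and I must handle the case $t_X(x)=t_Y(x)$, where the two walks move simultaneously and $b^X_n,b^Y_n$ must be made conditionally independent — this uses that, given $\mathfs{F}_n$, the increments of $X$ and $Y$ are conditionally independent. A secondary point is to confirm that the $\beta_x$ are genuinely independent across $x$, which reduces to the distinctness of first-hitting times just noted, together with the fact that the two walks always contribute through different $b$-variables. Once this bookkeeping is settled, the pointwise bound $\xi\ge\omega$ under the coupling is immediate.
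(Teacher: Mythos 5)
Your proof is correct and takes essentially the same route as the paper: couple each walk's step to a fresh uniform, carve out of the left-turn event a sub-event of probability exactly $1-p$ at a designated first-visit time, and note that the joint occurrence for both walks forces each to visit $x$ twice before the left front passes $x$, so $\xi(x)=p$ there, while the i.i.d.\ structure of the lower environment comes from the disjointness of the uniforms used at distinct sites. The only (cosmetic, and if anything slightly cleaner) difference is that you trigger on the immediate backtrack at the first visit to $x$, whereas the paper triggers on the left turn at the first visit to $x+1$; both yield the $(1-p)^2$ bound.
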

\begin{proof}
Every time a walker turns left to some positive position, we know it ate all the cookies in that position. Our goal is to build a random environment that controls the movement of both walkers. We do it by building an environment that is an intersection of some of the doubly eaten cookie sites of both the walkers. We build the environment in the following manner: Consider some time $n$ such that $L_n>x+1$. The probability both of the walkers turn left on their first visit to $x+1$ is greater than $(1-p)^2$. Define a coupling, $U_X(x),U_Y(x)\sim \mathcal{U}(0,1)$ i.i.d. On the first visit to $x+1$, $X$ turns left if $U_X(x)\geq\omega_1(x+1)$ and $Y$ turns left if $U_Y(x)\geq\omega_2(x+1)$. $\omega(x)=p$ if $U_X(x)<(1-p)$ and $ U_Y(x)<(1-p)$ otherwise $\omega(x)=\frac{1}{2}$. By this coupling we know that if $\omega(x)=p$, both of the walks turned left at their first visit to $x+1$.
\end{proof}

For the random environment described in lemma \ref{lem:environmentdef}, let us make some calculations we will use in later sections.
\bae
\ev[\rho_x]&=\ev\left[\frac{1-\omega(x)}{\omega(x)}\right]=\frac{(1-p)^3}{p}+p(2-p)<1\\
\ev\left[\frac{1}{\omega_0}\right]&=2p(2-p)+\frac{(1-p)^2}{p}
.\eae
By theorem \ref{thm:rwrespeed} the speed of the RWRE induced by the MERW is
\bae
\lim_{n\rightarrow\infty}\frac{\xi_n}{n}=\frac{1-\ev[\rho_0]}{\ev\left[\frac{1}{\omega_0}\right]}=\frac{(1-p)^2(2p-1)}{2p^2(2-p)+(1-p)^2}>0.
\eae

\begin{rmk}
At this point it is not yet clear that the MERW even has a positive speed. This we prove in the next section.
\end{rmk}

\section{Positive speed}
\subsection{Transience of the walks}
In this section we use notations and ideas introduced by Martin P.W. Zerner in \cite{ZERMERW}.
\begin{definition}
Denote by $TRS\xi$ the event $\{\forall z\in\mathbb{Z},\exists n\forall m>n, \xi_m\neq z\}$. A process $\{\xi_n\}_{n=1}^\infty$ is transient if $\pr(TRS\xi)=1$.
\end{definition}
\begin{definition}
Let $z\in\mathbb{Z},z>0$, define the events
\bae
RECX(z)&=\{X_n=z\quad {\rm i.o}\}\\
RECY(z)&=\{Y_n=z\quad {\rm i.o}\}
.\eae
For a given $z$, if $RECX(z)$ occurs, we say that $\{X_n\}_{n=1}^\infty$ is $z$-recurrent. Same for $\{Y_n\}_{n=1}^\infty$.
\end{definition}
\begin{prop}\label{prop:trans221}
The MERW is transient for $p=1$.
\end{prop}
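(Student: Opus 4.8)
The plan is to show that at $p=1$ each walk escapes to $+\infty$, i.e. $\liminf_{n} X_n=\liminf_n Y_n=+\infty$, which is exactly transience of both coordinates. The whole argument will use only the marginal step probabilities in \eqref{eq:modelrule}, so conditional independence of the two walkers is never needed. First I would record a domination fact: since every cookie value lies in $\{1/2,p\}$ with $p=1\ge 1/2$, each walk steps to the right with conditional probability at least $1/2$ given $\mathfs{F}_n$. Coupling each coordinate from below with a simple random walk therefore gives $\limsup_n X_n=\limsup_n Y_n=+\infty$ almost surely, equivalently $M_X(n)\to\infty$ and $M_Y(n)\to\infty$.

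The core is a one-sided confinement mechanism special to $p=1$. Suppose $Y$ visits a site $z$ at least twice, and let $t$ be the time of the second visit. For every $n\ge t$ the count $\#\{m\le n:Y_m=z\}$ is at least $2$, so whenever $X_n=z$ the rule \eqref{eq:modelrule} forces $X_{n+1}=z+1$. Since $\limsup_n X_n=\infty$ there is a time $s\ge t$ with $X_s\ge z$; as the only way to reach $z-1$ from $\{\ge z\}$ is the now-forbidden step $z\to z-1$, we get $X_n\ge z$ for all $n\ge s$, hence $\liminf_n X_n\ge z$. Consequently, \emph{if $Y$ visits arbitrarily large sites at least twice then $\liminf_n X_n=+\infty$}, and symmetrically with the roles of $X$ and $Y$ exchanged. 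The proposition thus reduces to the claim that, almost surely, each walk doubly-visits arbitrarily large sites.

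It remains to prove this claim, and this is the step I expect to be the main obstacle, since it is exactly the assertion that neither walk can escape to $+\infty$ monotonically. I would argue by contradiction on the event $A$ that $Y$ visits every site above some level $\ell$ at most once. Using $\limsup_n Y_n=\infty$ one checks that on $A$ the walk $Y$ is eventually strictly increasing. At each record site the conditional probability that $Y$ steps right is exactly $1/2$ unless the step is \emph{forced}, i.e. unless $X$ has already visited that record twice; since a strictly increasing walk makes infinitely many record steps and infinitely many fair coins cannot all come up right, on $A$ all but finitely many records of $Y$ must be forced, so $X$ has doubly-visited $Y_n$ by time $n$ for all large $n$. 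As $Y_n$ grows linearly this gives $M_X(n)\ge Y_n\ge n-C$ for large $n$, which forces $X$ to make only finitely many left steps; hence $X$ is eventually strictly increasing as well.

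But on $A$ the walk $Y$ doubly-visits no site above $\ell$, so by \eqref{eq:modelrule} the walk $X$ is driftless (probability exactly $1/2$ to the right) at every site above $\ell$; applying the fair-coin argument now to the infinitely many high records of $X$ shows $\pr(A)=0$. By symmetry $X$ also doubly-visits arbitrarily large sites almost surely, and the confinement mechanism of the second paragraph then completes the proof. The delicate point throughout is that the two ``fair coin at a record'' observations must be chained across the two walkers: the hypothetical monotonicity of $Y$ is converted into double-visits by $X$, which are in turn converted back into a forbidden monotonicity of $X$, and it is keeping the timing in these two conversions consistent (via the linear lower bound on $M_X(n)$) that is the technical heart.
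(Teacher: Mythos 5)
Your argument is correct, and it takes a genuinely different route from the paper's. The paper proves $\pr(TRSY)=1$ by conditioning on the dichotomy $TRSX$ versus $TRSX^c$: if $X$ is recurrent at some site it doubly-eats the cookies there and, at $p=1$, permanently blocks $Y$; if $X$ is transient, a Borel--Cantelli argument (fair coins whenever $X$ sits at a site $Y$ has visited at most once) produces a doubly-visited site ahead of any given $z$, again blocking $Y$; symmetry then finishes. You instead prove directly the single claim that each walk a.s.\ doubly-visits arbitrarily large sites, and combine it with the same $p=1$ ``one-way door'' observation to get $X_n,Y_n\to+\infty$, which is a slightly more explicit conclusion than ``every site is visited finitely often.'' The two proofs share their essential mechanisms --- impassability of a doubly-visited site at $p=1$, and a conditional Borel--Cantelli/L\'evy argument exploiting that a step from a site the other walker has visited at most once is a fair coin --- but you replace the recurrence/transience case split with the forcing chain: on the event $A$ that $Y$ singly-visits everything above $\ell$, eventual strict monotonicity of $Y$ forces all but finitely many of its record steps to be forced by $X$, hence $M_X(n)\ge n-C$, hence $X$ is itself eventually monotone over sites where its coin is fair, an event of probability zero. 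That chain is the delicate part and you handle it correctly: the step from $A$ to ``$Y$ eventually strictly increasing'' genuinely needs $\limsup_n Y_n=\infty$, and both Borel--Cantelli applications are to adapted events (whether a given record step is forced is $\mathfs{F}_n$-measurable). What your version buys is that it avoids treating the recurrent case at all and isolates the key claim (double visits at arbitrarily high sites) in a reusable form; what the paper's case analysis buys is that each branch of the dichotomy admits a shorter, more local argument without the bookkeeping of the forcing chain.
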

We prove this proposition by a sequence of lemmas.
\begin{lem}
For every $z'>z$, $\pr(RECX(z')| RECX(z))=1$.
\end{lem}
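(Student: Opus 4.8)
The plan is to prove the equivalent statement $\pr(RECX(z)\setminus RECX(z'))=0$, i.e.\ that on the event that $X$ returns to $z$ infinitely often it almost surely also returns to $z'$ infinitely often. The only structural fact I would use is that every rightward transition of $X$ has conditional probability at least $1/2$: since the cookie environment takes values in $[1/2,1]$, we have $\omega_1(\cdot,\cdot)\ge 1/2$ regardless of how many cookies have been eaten, so each step to the right has probability at least $1/2$ given the past $\mathfs{F}_n$. In particular the mutual coupling with $Y$ plays no role here beyond guaranteeing this one-step bound, and the argument in fact does not use $p=1$.

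Next I would introduce the successive hitting times of $z$ by $X$, namely $\tau_0=\inf\{n\ge 0:X_n=z\}$ and $\tau_{i+1}=\inf\{n>\tau_i:X_n=z\}$. On $RECX(z)$ every $\tau_i$ is finite and there are infinitely many of them. For each $i$ let $A_i$ be the event that, starting from time $\tau_i$, the walk $X$ performs $z'-z$ consecutive steps to the right, thereby reaching $z'$ at time $\tau_i+(z'-z)$. Iterating the one-step lower bound through the tower property over the $z'-z$ successive increments gives, on $\{\tau_i<\infty\}$,
\[
\pr(A_i\mid \mathfs{F}_{\tau_i})\ \ge\ \left(\tfrac12\right)^{z'-z}>0,
\]
a bound uniform in $i$ and independent of the cookie configuration. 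Note also that on $A_i$ the walk moves strictly rightward and does not revisit $z$ during those steps, so whether $A_i$ occurred is decided by time $\min(\tau_i+(z'-z),\tau_{i+1})$; in all cases $A_i$ is $\mathfs{F}_{\tau_{i+1}}$-measurable.

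Finally I would close with Lévy's conditional extension of the second Borel--Cantelli lemma applied to the filtration $(\mathfs{F}_{\tau_i})_i$, using the measurability just noted to place $A_i$ in $\mathfs{F}_{\tau_{i+1}}$. Since on $RECX(z)$ infinitely many $\tau_i$ are finite, the conditional probabilities $\pr(A_i\mid\mathfs{F}_{\tau_i})$ sum to $+\infty$ there, and hence $A_i$ occurs for infinitely many $i$ almost surely on $RECX(z)$. Each occurrence of $A_i$ produces a visit of $X$ to $z'$, so $X$ visits $z'$ infinitely often, which is precisely $RECX(z')$; this gives $\pr(RECX(z')\mid RECX(z))=1$.

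I expect the only delicate point to be the bookkeeping in this last step: the events $A_i$ are not independent and the $\tau_i$ are stopping times that are infinite off $RECX(z)$, so I would phrase everything relative to the stopped $\sigma$-algebras $\mathfs{F}_{\tau_i}$ and restrict attention to $RECX(z)$ throughout, using the form of the lemma that identifies $\{A_i\ \text{i.o.}\}$ with $\{\sum_i\pr(A_i\mid\mathfs{F}_{\tau_i})=\infty\}$ up to a null set. The uniform lower bound on the one-step rightward probabilities makes the divergence of the sum automatic on $RECX(z)$, so no quantitative estimate beyond $\omega_1\ge 1/2$ is required.
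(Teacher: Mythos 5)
Your proposal is correct and is essentially the paper's own argument: the paper's proof is the one-line observation that each visit to $z$ gives a uniformly positive probability of reaching $z'$, ``thus by standard arguments'' $X$ is $z'$-recurrent, and your write-up simply makes those standard arguments explicit (the uniform bound $(1/2)^{z'-z}$ from $\omega_1\ge 1/2$ plus L\'evy's conditional Borel--Cantelli along the stopping times $\tau_i$). No substantive difference in approach.
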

\begin{proof}
Every time $\{X_n\}$ reaches $z$ it has some positive probability to reach $z'>z$. Thus by standard arguments
$\{X_n\}$ is $z'$-recurrent a.s.
\end{proof}
\begin{lem}\label{lem:recyrecxzero}
Let $z\in\mathbb{Z},z>0$ then $\pr(RECY(z)\cap RECX(z))=0$.
\end{lem}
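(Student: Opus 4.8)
The plan is to argue deterministically on the event $RECX(z)\cap RECY(z)$, exploiting the fact that for $p=1$ a site from which the \emph{other} walker has already been seen at least twice becomes a one-way gate: a walk standing there is pushed to the right with probability one. I will show that if both walks return to $z$ infinitely often, then first $z$ and then $z+1$ turn into such gates, and the second gate makes further returns to $z$ impossible, contradicting recurrence. To set this up, note that on $RECX(z)\cap RECY(z)$ each walk visits $z$ infinitely often, hence at least twice; let $T$ be the first time at which both $X$ and $Y$ have visited $z$ at least twice. By the model rule \eqref{eq:modelrule} with the environment $(\tfrac12,\tfrac12,p,p,\dots)$ and $p=1$, for every $n\ge T$ a walk located at $z$ jumps to $z+1$ with probability one, since the other walk has accumulated at least two visits to $z$. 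Thus after $T$ the site $z$ is a one-way gate: a walk cannot step from $z$ to $z-1$, so once a walk reaches height $\ge z$ it stays $\ge z$ forever.

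Next I would promote the gate from $z$ to $z+1$. Since each walk visits $z$ infinitely often and, after its first post-$T$ visit to $z$, never drops below $z$, there is a time $T^\ast\ge T$ after which both walks remain in $\{\,\cdot\ge z\,\}$. From $z$ each walk is forced to $z+1$, so infinitely many visits to $z$ force infinitely many visits to $z+1$ for both walks. In particular both walks visit $z+1$ at least twice, so there is a finite time $T_1$ after which a walk sitting at $z+1$ is likewise pushed to $z+2$ with probability one.

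Finally I would extract the contradiction. After $\max(T^\ast,T_1)$ a walk at $z$ moves to $z+1$, and from $z+1$ it is forced rightward, so it can never return to $z$: reaching $z$ would require a left step out of $z+1$, which now has probability zero. Hence each walk visits $z$ only finitely often after $\max(T^\ast,T_1)$, contradicting $RECX(z)\cap RECY(z)$. Therefore the event is impossible and $\pr(RECX(z)\cap RECY(z))=0$.

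The argument is entirely deterministic once we condition on the event, so the only care needed is in the cascade: one must first guarantee that both walks are permanently trapped in $\{\,\cdot\ge z\,\}$ (the gate property at $z$) before asserting that infinitely many visits to $z$ force infinitely many visits to $z+1$, and one must check that the count governing the drift at $z+1$ is the number of visits of the \emph{other} walk, which is indeed $\ge 2$ eventually precisely because both walks visit $z+1$ infinitely often. The hardest point is this bootstrapping step from $z$ to $z+1$; the whole scheme uses $p=1$ crucially, since for $p<1$ the gates leak and a genuinely probabilistic argument would be required instead.
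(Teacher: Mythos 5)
Your proof is correct and rests on the same idea as the paper's (much terser) argument: for $p=1$, once $z$ and $z+1$ have each been visited at least twice by the other walker they become one-way gates, so a walk passing $z+1$ can never return to $z$, contradicting recurrence. The only real difference is bookkeeping: the paper derives the gates from the $z$-recurrence of $X$ alone ($X$ a.s. doubly eats the cookies at $z$ and at $z+1$, which blocks $Y$), whereas you bootstrap them symmetrically from the joint recurrence of both walks; both routes are valid.
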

\begin{proof}
If $\{X_n\}$ is $z$-recurrent, it will eat all the
cookies between $z$ and $z+1$ a.s. Thus $\{Y_n\}$ can't return to
$z$ i.o a.s.
\end{proof}

\begin{cor}
For every $z\in\mathbb{Z}$, $\pr(TRSY|RECX(z))=1$
\end{cor}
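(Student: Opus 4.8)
The plan is to derive the corollary from the two preceding lemmas together with the $X\leftrightarrow Y$ symmetry of the model, reducing everything to an unconditional statement. First I would note that $Y$ fails to be transient exactly when some site is visited infinitely often, i.e. $\{TRSY\}^c=\bigcup_{w\in\mathbb{Z}}RECY(w)$. Since this is a countable union, it suffices to prove $\pr(RECX(z)\cap RECY(w))=0$ for every fixed $w\in\mathbb{Z}$ and then sum over $w$; this immediately yields $\pr(RECX(z)\cap\{TRSY\}^c)=0$, hence $\pr(TRSY\mid RECX(z))=1$ whenever the conditioning event has positive probability.

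The conceptual key is that the dynamics of $X$ and $Y$ are interchangeable, so the first (unlabelled) recurrence-propagation lemma of this subsection applies verbatim to $Y$: for every $w'>w$ one has $\pr(RECY(w')\mid RECY(w))=1$, because each visit of $Y$ to $w$ carries a positive chance of subsequently reaching $w'$. Thus on $RECY(w)$ the walk $Y$ is a.s.\ $w'$-recurrent for all $w'>w$, and symmetrically on $RECX(z)$ the walk $X$ is a.s.\ $z'$-recurrent for all $z'>z$.

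Now I would fix $w$ and funnel the two separate recurrence events up to a common site. Set $w'=\max\{w,z,0\}+1$, so that simultaneously $w'>w$, $w'>z$ and $w'>0$. On $RECX(z)\cap RECY(w)$ we then have, up to a null set, both $RECX(w')$ (from the first lemma applied to $X$) and $RECY(w')$ (from its $Y$-analogue), so that $RECX(z)\cap RECY(w)\subseteq RECX(w')\cap RECY(w')$ almost surely. But $w'>0$, so Lemma \ref{lem:recyrecxzero} gives $\pr(RECX(w')\cap RECY(w'))=0$, whence $\pr(RECX(z)\cap RECY(w))=0$. Summing over the countably many $w\in\mathbb{Z}$ completes the argument.

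The only point that requires genuine care is this funneling step: one must choose a single site $w'$ that is above $z$, above $w$, and strictly positive, so that the positivity hypothesis of Lemma \ref{lem:recyrecxzero} is actually met, and it is precisely the propagation of recurrence to all higher sites (for both walks) that makes such a common $w'$ usable. I expect the symmetry observation yielding the $Y$-version of the recurrence-propagation lemma to be the real content; once it is in hand, the rest is a routine countable union.
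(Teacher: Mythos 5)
Your proof is correct and follows essentially the same route as the paper: both reduce the claim to showing $\pr\bigl(RECX(z)\cap\bigcup_{w}RECY(w)\bigr)=0$ via a countable union, using the recurrence-propagation lemma (and its $Y$-analogue by symmetry) together with Lemma \ref{lem:recyrecxzero}. If anything, you are more explicit than the paper about the funneling step --- choosing a common site $w'>\max\{w,z,0\}$ so that the positivity hypothesis of Lemma \ref{lem:recyrecxzero} applies --- which the paper leaves implicit when it asserts $\pr[RECX(z)\cap\cup_{z'}RECY(z')]=0$.
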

\begin{proof}
\bae
&\pr(TRSY\cap RECX(z))\\&=\pr\bigg[TRSY\cap RECX(z)\cap\left(\cup_{z'}RECY(z')\biguplus(\cup_{z'}RECY(z'))^c\right)\bigg]\\
&=\pr[RECX(z)\cap\cup_{z'}RECY(z')]+P[RECX(z)\cap(\cup_{z'}RECY(z'))^c]\\
&=\pr[RECX(z)]
\eae
where $\biguplus$ stands for disjoint union and the second equality follows from the fact that by Lemma \ref{lem:recyrecxzero}, \bae&\pr[RECX(z)\cap\cup_{z'}RECY(z')\cap TRSY]\leq\\&\pr[RECX(z)\cap\cup_{z'}RECY(z')]=0.\eae
\end{proof}

\begin{cor}\label{cor:eithertrans}
$\pr(TRSY|TRSX^c)=1$.
\end{cor}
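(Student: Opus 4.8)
The goal is to prove that $\pr(TRSY \mid TRSX^c) = 1$, where $TRSX^c$ is the event that $\{X_n\}$ is \emph{not} transient. The plan is to decompose the non-transience of $\{X_n\}$ into the recurrence events for individual sites and then invoke the corollary just proved. Specifically, I would first observe that on the event $TRSX^c$ the walk $\{X_n\}$ fails to be transient, which by the definition of $TRS$ means that there exists some $z \in \mathbb{Z}$ for which $\{X_n\}$ returns to $z$ infinitely often; that is,
\bae
TRSX^c \subseteq \bigcup_{z \in \mathbb{Z}} RECX(z).
\eae
This is the structural reduction that connects the abstract transience event to the concrete site-recurrence events handled in the preceding corollaries.

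Next I would use the corollary established immediately above, namely that $\pr(TRSY \mid RECX(z)) = 1$ for every $z$, equivalently $\pr(TRSY^c \cap RECX(z)) = 0$ for each fixed $z$. The key step is then a countable-union argument: since there are only countably many integers $z$, and each intersection $TRSY^c \cap RECX(z)$ is null, the union over all $z$ is also null. Concretely,
\bae
\pr(TRSY^c \cap TRSX^c) &\le \pr\Big(TRSY^c \cap \bigcup_{z} RECX(z)\Big)\\
&\le \sum_{z \in \mathbb{Z}} \pr(TRSY^c \cap RECX(z)) = 0.
\eae
From $\pr(TRSY^c \cap TRSX^c) = 0$ the conclusion $\pr(TRSY \mid TRSX^c) = 1$ follows immediately upon dividing by $\pr(TRSX^c)$ (the statement is vacuous if $\pr(TRSX^c)=0$).

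I do not anticipate a serious obstacle here, since the statement is essentially a packaging of the previous corollary via countable subadditivity; the only point requiring a moment's care is the very first inclusion, $TRSX^c \subseteq \bigcup_z RECX(z)$. One should check that the negation of the transience event $\{\forall z\, \exists n\, \forall m>n,\ \xi_m \neq z\}$ is precisely $\{\exists z\, \forall n\, \exists m>n,\ \xi_m = z\}$, i.e.\ that some site is visited infinitely often, which matches the definition of $RECX(z)$ for that $z$. Once this logical negation is confirmed, the rest is a one-line union bound. The symmetry of the model in $X$ and $Y$ guarantees that the analogous corollary $\pr(TRSX \mid TRSY^c)=1$ holds as well, which is presumably where this result is headed in establishing transience of the full MERW.
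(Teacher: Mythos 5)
Your proof is correct and follows the same route as the paper: the paper's one-line argument ("given $TRSX^c$ there exists some $z$ such that $RECX(z)$ occurs, and in that case $TRSY$ occurs a.s.") is exactly your inclusion $TRSX^c\subseteq\bigcup_z RECX(z)$ combined with the preceding corollary. You merely make explicit the countable subadditivity step that the paper leaves implicit, which is a fine (and slightly more careful) way to write the same proof.
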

\begin{proof}
Given $TRSX^c$ there exists some $z$ such that $RECX(z)$ occurs. In that case we saw $TRSY$ occurs a.s.
\end{proof}

\begin{lem}\label{lem:bothtrans}
Let $z\in\mathbb{Z}$, $\pr(RECY(z)|TRSX)=0$.
\end{lem}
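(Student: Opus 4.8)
The plan is to argue by contradiction, assuming $\pr(RECY(z)\cap TRSX)>0$ and deriving that $Y$ cannot in fact return to $z$ infinitely often. Two preliminary reductions set the stage. First, since every conditional step of $X$ has probability at least $1/2$ to move right (the environment takes values in $[1/2,1]$), the walk $X$ cannot drift to $-\infty$, so on $TRSX$ one has $X_n\to+\infty$. Second, by the single-walk lemma already established, $RECY(z)$ implies $RECY(w)$ for every $w\ge z$; hence on our event $Y$ visits each site $w\ge z$ at least twice, and in particular (using $p=1$) every such site eventually becomes one from which $X$ is deterministically pushed to the right.

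Next I would rule out any backtracking of $X$ above $z$. Suppose $X$ were to visit some site $w\ge z+1$ at least twice. Then, exactly as in the proof of Lemma~\ref{lem:recyrecxzero} but with the roles of the walks reversed, after the second visit of $X$ to $w$ the site $w$ is a permanent right-pushing barrier for $Y$: whenever $Y$ sits at $w$ it must step to $w+1$, so $Y$ can never cross from $w$ to $w-1$. Since $w\ge z+1$ and $Y$ is recurrent at $w$, after the barrier forms $Y$ reaches $w$ and is thereafter confined to $[w,\infty)$, contradicting $RECY(z)$ because $z\le w-1$. Consequently, on the event, $X$ visits each site $w\ge z+1$ at most once. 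Together with $X_n\to+\infty$ this forces $X$ to be eventually strictly increasing: there is a random time $T$ after which $X_{n+1}=X_n+1$ for all $n\ge T$, i.e. $X$ moves ballistically to the right.

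It remains to see that this ballistic behaviour is incompatible with $Y$ being recurrent at $z$. At each step $n\ge T$ the walk $X$ stands at a site $w=X_n$ it is visiting for the first time, so its rightward move is governed by the cookie index $\#\{m\le n:Y_m=w\}$: the move is deterministic precisely when $Y$ has already visited $w$ at least twice, and otherwise it is a fair coin flip of probability exactly $1/2$, independent of $\mathfs{F}_n$. Let $\tau_1<\tau_2<\cdots$ enumerate the times $n\ge T$ at which the move is a genuine coin flip. On our event $X$ always moves right, so all these coin flips come up right; but conditioning successively through the tower property shows that the probability that the first $k$ of them are all right equals $2^{-k}$, so almost surely only finitely many coin-flip steps occur. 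Hence for all large $n$ the move at $X_n=z+1+(n-T)$ is forced, which means $Y$ has visited that site twice by time $n$ and therefore $M_Y(n)\ge z+1+(n-T)\ge n-C$ for a constant $C$. A nearest-neighbour walk satisfies $M_Y(n)\le Y_0+n$, so the maximum $M_Y$ increases at all but finitely many steps; this leaves only finitely many steps available for descents, so $Y$ is eventually strictly increasing and cannot return to $z$ infinitely often, contradicting $RECY(z)$.

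I expect the main obstacle to be the coin-flip step: one must justify carefully that at each unforced first visit the increment is a fair coin that is conditionally independent of the past, and that the stopping times $\tau_k$ are well defined on the event, so that the clean bound $2^{-k}$ is legitimate despite the heavy dependence built into the MERW. The reductions in the first two paragraphs are combinatorial and robust, and the final bookkeeping comparing the linear growth $M_Y(n)\ge n-C$ with the return requirement of recurrence is elementary; the delicate point is extracting genuinely independent randomness from the cookie mechanism along the random subsequence $(\tau_k)$.
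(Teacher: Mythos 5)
Your proof is correct, and while it rests on the same two ingredients as the paper's --- the barrier observation (once $X$ visits a site $w>z$ twice, the $p=1$ cookies there push $Y$ permanently to the right of $w$, which kills $RECY(z)$) and a Borel--Cantelli argument on the fair coin flips $X$ performs at sites $Y$ has visited at most once --- you assemble them in the reverse order. The paper argues forward: on $RECY(z)\cap TRSX$ the walk $X$ meets infinitely many coin-flip sites, so by Borel--Cantelli it turns left, hence double-visits, some site above $z$, creating the barrier and the contradiction. You instead deploy the barrier first, to force $X$ to visit each site above $z$ at most once and hence to be eventually strictly increasing; Borel--Cantelli then yields only finitely many coin-flip steps, so eventually every site $X$ occupies has already been double-visited by $Y$, giving $M_Y(n)\ge n-C$ and forcing $Y$ itself to be eventually monotone, which contradicts $RECY(z)$ by a different route. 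What your version buys is an explicit treatment of the scenario in which $Y$ runs ahead of $X$ and eats every cookie before $X$ arrives, so that $X$ never sees a coin flip; the paper's phrase ``$X$ will go right until it reaches an uneaten cookie'' quietly assumes this case away, so your argument is in this respect more complete, at the cost of the extra $M_Y$ bookkeeping at the end. On the one delicate point you flag yourself: the $2^{-k}$ tower-property computation along times $\ge T$ is awkward because $T$ is defined from the whole future and is not a stopping time, but the standard fix is L\'evy's conditional Borel--Cantelli applied to the events that time $n$ is a coin flip \emph{and} $X$ steps left, whose conditional probabilities given $\mathfs{F}_n$ equal $\tfrac12$ times the ($\mathfs{F}_n$-measurable) indicator that $n$ is a coin-flip time; since on your event $X$ makes only finitely many left steps in total, this forces finitely many coin-flip times almost surely, which is all you use, and it is no less rigorous than the paper's own invocation of Borel--Cantelli.
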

\begin{proof}
Let $\tau=\sup\{m>0|X_m=\xi\}$ for some $\xi>z$. For infinitely many
times, greater than $\tau$, $\{Y_n\}$ will reach $z$ and $\{X_n\}$
will be to the right of $\xi$. If $\{X_n\}$ is to the right of
$\max\{z'\in\mathbb{Z}:\exists k\leq n, Y_k=z'\}$, there are no
eaten cookies in its position and $\{X_n\}$ has probability
$\frac{1}{2}$ to go left. If $\{X_n\}$ is to the left of
$\max\{z\in\mathbb{Z}:\exists k\leq n, Y_k=z\}$, there are no
cookies left between $z$ and $\max\{z\in\mathbb{Z}:\exists k\leq
n, Y_k=z\}$. In this case $\{X_n\}$ will go right until it reaches
an uneaten cookie, and again it has a probability $\frac{1}{2}$ of
going left. Thus by the Borel-Cantelli lemma $\{X_n\}$ will eat
all the cookies in a site $\xi'>z$, after $\{Y_n\}$ passes $\xi'$
it will never return to $z$ which contradicts the $z$-recurrence
of $\{Y_n\}$.
\end{proof}
\begin{cor}
$\pr(TRSY^c|TRSX)=0$.
\end{cor}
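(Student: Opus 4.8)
The plan is to deduce this corollary directly from Lemma \ref{lem:bothtrans} by a countable union bound. First I would record the set-theoretic identity that makes the reduction work: the event $TRSY$ asserts that for every site $z$ the walk $\{Y_n\}$ eventually avoids $z$ forever, i.e. $TRSY=\bigcap_{z\in\mathbb{Z}}RECY(z)^c$. Taking complements and using De~Morgan gives
\[
TRSY^c=\bigcup_{z\in\mathbb{Z}}RECY(z),
\]
so the complement of transience for $\{Y_n\}$ is exactly the (countable) union of the recurrence events $RECY(z)$ over $z\in\mathbb{Z}$.

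Next I would condition on $TRSX$. Assuming $\pr(TRSX)>0$ (otherwise the conditional statement is vacuous), the map $A\mapsto\pr(A\mid TRSX)$ is itself a probability measure and is therefore countably subadditive. Applying Lemma \ref{lem:bothtrans}, which gives $\pr(RECY(z)\mid TRSX)=0$ for \emph{every} $z\in\mathbb{Z}$, together with subadditivity over the countable index set $\mathbb{Z}$ yields
\[
\pr(TRSY^c\mid TRSX)=\pr\!\left(\bigcup_{z\in\mathbb{Z}}RECY(z)\,\Big|\,TRSX\right)\le\sum_{z\in\mathbb{Z}}\pr(RECY(z)\mid TRSX)=0,
\]
which is the claimed equality.

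I do not expect any genuine obstacle here; the corollary is a routine packaging of Lemma \ref{lem:bothtrans}. The only two points meriting a moment's care are the set identity $TRSY^c=\bigcup_z RECY(z)$ (which relies on the definition of $TRSY$ quantifying over all $z\in\mathbb{Z}$, matching the range of $z$ in Lemma \ref{lem:bothtrans}) and the legitimacy of the union bound, which is guaranteed precisely because $\mathbb{Z}$ is countable and conditional probability given a positive-measure event is a bona fide probability measure. Combined with the earlier Corollary \ref{cor:eithertrans} that $\pr(TRSY\mid TRSX^c)=1$, this corollary shows that $\{X_n\}$ and $\{Y_n\}$ are simultaneously transient almost surely, completing the transience dichotomy needed for Proposition \ref{prop:trans221}.
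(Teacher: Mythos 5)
Your proposal is correct and is essentially identical to the paper's own one-line argument: the paper also writes $\pr(TRSY^c|TRSX)\leq\sum_z\pr(RECY(z)|TRSX)=0$, using the countable union bound together with Lemma \ref{lem:bothtrans}. You have merely made explicit the set identity $TRSY^c=\bigcup_z RECY(z)$ and the countable subadditivity of the conditional measure, which the paper leaves implicit.
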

\begin{proof}
$\pr(TRSY^c|TRSX)\leq\sum_z\pr(RECY(z)|TRSX)=0$.
\end{proof}
\begin{cor}
$\pr(TRSY|TRSX)=1$.
\end{cor}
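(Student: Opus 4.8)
The plan is to observe that this corollary is an immediate consequence of the one proved just above it, together with the purely set-theoretic fact that $TRSY$ and $TRSY^c$ are complementary events. All of the probabilistic content has already been extracted in Lemma \ref{lem:bothtrans} and in the preceding corollary; no further analysis of the joint dynamics of the two walks is required here, so I would keep the argument to a single line.

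Concretely, first I would note that for any conditioning event $A$ one has $\pr(TRSY\mid A)+\pr(TRSY^c\mid A)=1$, since $TRSY^c$ is by definition the complement of the transience event $TRSY$. Taking $A=TRSX$ and invoking the previous corollary, which asserts $\pr(TRSY^c\mid TRSX)=0$, I would conclude
\bae
\pr(TRSY\mid TRSX)=1-\pr(TRSY^c\mid TRSX)=1-0=1.
\eae

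There is no genuine obstacle at this stage. The one substantive estimate behind the whole chain is the bound $\pr(RECY(z)\mid TRSX)=0$ supplied by Lemma \ref{lem:bothtrans}, whose proof uses a Borel--Cantelli argument to show that once $\{X_n\}$ is transient it eventually eats all the cookies at some site $\xi'>z$ lying to the right of the range of $\{Y_n\}$, thereby preventing $\{Y_n\}$ from returning to $z$ infinitely often. Summing that bound over the countably many $z$ already gave $\pr(TRSY^c\mid TRSX)\le\sum_z\pr(RECY(z)\mid TRSX)=0$ in the preceding corollary, and the present statement is merely its complementary reformulation. If one wished to make the excerpt self-contained, the only thing worth spelling out is the identity $TRSY^c=\bigcup_z RECY(z)$, which follows by negating the quantifiers in the definition of transience, but this is routine.
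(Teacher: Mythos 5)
Your proposal is correct and is essentially identical to the paper's own one-line proof, which writes $1=\pr(TRSY|TRSX)+\pr(TRSY^c|TRSX)=\pr(TRSY|TRSX)$ and relies on the preceding corollary for $\pr(TRSY^c|TRSX)=0$. Nothing further is needed.
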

\begin{proof}
$1=\pr(TRSY|TRSX)+\pr(TRSY^c|TRSX)=\pr(TRSY|TRSX)$.
\end{proof}

\begin{proof}[Proof of proposition \ref{prop:trans221}]
\bae
\pr(TRSY)&=\pr(TRSY|TRSX)\pr(TRSX)+\pr(TRSY|TRSX^c)\pr(TRSX^c)\\&=\pr(TRSX\cup TRSX^c)=1.
\eae
\end{proof}

\begin{prop}\label{prop:MERWtransb}
The MERW is transient for all $\frac{1}{2}<p<1$.
\end{prop}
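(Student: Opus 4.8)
The plan is to show that both coordinates are transient to $+\infty$, i.e.\ that each visits every site only finitely often, which is exactly $\pr(TRSX)=\pr(TRSY)=1$. First I would record one unconditional fact, valid for all $p$: given the past, each coordinate jumps right with conditional probability in $\{1/2,p\}\subset[1/2,1]$, so a step-by-step coupling of each walk with a simple symmetric random walk $S_n$ using common uniforms gives $X_n\ge S_n$ and $Y_n\ge S_n$ for all $n$. Since $\limsup_n S_n=+\infty$ a.s., both maxima obey $M_X(n),M_Y(n)\to\infty$, hence the left front $L_n=\min(M_X(n),M_Y(n))$ is nondecreasing with $L_n\to\infty$, and the intersection environment $\xi$ is a.s.\ well defined on all of $\Z_{\ge z}$. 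This also removes any worry about a walk escaping to $-\infty$.

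By Lemma \ref{lem:environmentdef} the environment $\xi$ dominates an i.i.d.\ environment $\eta$ whose drift is $1/2$ with probability $1-(1-p)^2$ and $p$ with probability $(1-p)^2$. The computation following Lemma \ref{lem:environmentdef} gives $\ev[\rho_0]<1$, so the walk in $\eta$ is transient to $+\infty$; moreover $\eta$ meets the hypotheses of Theorem \ref{thm:poszerodrift_ldp}, namely $\rho_{\max}=1$ from the $1/2$-sites and $\pr(\omega_0=\tfrac12)=1-(1-p)^2>0$. The crucial structural point is that a cookie, once eaten, can only raise the rightward drift, so at every already-front-visited site the rightward drift either walk faces is at least that prescribed by $\xi(\cdot)\ge\eta(\cdot)$.

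Now fix $z>0$ and suppose, for contradiction, that some walk returns to $z$ infinitely often. For each level $M>z$ let $\theta_M$ be the first time $L_n\ge M$; since $L_n\to\infty$, $\theta_M<\infty$ for all $M$, and recurrence at $z$ forces a visit to $z$ at some time after $\theta_M$ for every $M$. But after $\theta_M$ the whole interval $[z,M]$ lies behind the front, and each walk has already reached height $\ge M$, so a walk reaching $z$ must backtrack at least $M-z$ steps across sites whose rightward drift is at least that of $\eta$. By the monotonicity of hitting probabilities in the environment together with Lemma \ref{lem:rwrebacktrack} applied to $\eta$, this has probability at most $2(\ev[\rho_0])^{M-z}$ (alternatively Theorem \ref{thm:poszerodrift_ldp} bounds $\pr(X_n\le z)\le\pr(\eta_n\le z)\le e^{-Cn^{1/3}}$ directly). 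These bounds are summable in $M$ (resp.\ $n$), so by Borel--Cantelli the event ``a walk reaches $z$ after $\theta_M$'' occurs for only finitely many $M$, a contradiction. Hence no walk is $z$-recurrent, for any $z$, giving $\pr(TRSX)=\pr(TRSY)=1$. In the language of Proposition \ref{prop:trans221} this simultaneously supplies the $1/2<p<1$ analogues of Lemmas \ref{lem:recyrecxzero} and \ref{lem:bothtrans}, after which the remaining corollaries carry over verbatim.

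The main obstacle is rigour in the backtracking step: the environment a MERW particle actually faces is not the stationary i.i.d.\ field $\eta$ but is revealed dynamically and depends on both trajectories, and the identity of the front particle changes over time. The delicate work is to combine the coupling of Lemma \ref{lem:environmentdef} with the strong Markov property at the stopping times $\theta_M$, so that the estimate for the stationary environment transfers to the adaptive walk uniformly in $M$; once this monotone coupling is in place, the summation and Borel--Cantelli step are routine.
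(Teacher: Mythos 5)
Your overall strategy is the same as the paper's: show the left front tends to $+\infty$, dominate the intersection environment by the i.i.d.\ environment of Lemma \ref{lem:environmentdef}, and rule out deep backtracking via Lemma \ref{lem:rwrebacktrack} and a Borel--Cantelli argument. The gap is in the one step you compress into a single inequality. The event ``some walk visits $z$ at some time after $\theta_M$'' is not a single backtracking excursion: after $\theta_M$ the walk makes an unbounded number of fresh down-excursions from (a neighbourhood of) the front, and each one is a new chance to descend to $z$. Lemma \ref{lem:rwrebacktrack} only bounds the probability that \emph{one} excursion reaches depth $k$ before the walk advances by one unit, so the bound $2(\ev[\rho_0])^{M-z}$ for the ``ever'' event does not follow from it; you must count attempts. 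This is exactly what the paper's proof supplies: it partitions time into blocks $[n^4,(n+1)^4]$, observes there are at most $n^4$ backtrack attempts per block, uses $L_{n^4}>n$ (from domination of the SRW maximum, essentially your coupling) so that each attempt must descend at least $n$, and obtains the summable bound $\pr(B[n,n^4]^c)\le 2n^4(\ev[\rho_0])^n$. Without some such bookkeeping your final step has nothing summable to work with; note also that your events $E_M$ are nested, so what you actually need is $\pr(E_M)\to 0$, which is precisely the unproved claim.

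Your fallback via Theorem \ref{thm:poszerodrift_ldp} does not repair this: that theorem concerns $\pr(n^{-1}X_n\in G)$ for open $G\subset(0,v_\alpha)$ separated from $v_\alpha$, whereas $\{X_n\le z\}$ corresponds to $n^{-1}X_n$ near $0$, which is excluded from $G$; moreover the pointwise comparison $\pr(X_n\le z)\le\pr(\eta_n\le z)$ between the MERW and the RWRE at a fixed time is not established by the environment domination, which controls hitting probabilities rather than positions. On the positive side, you correctly identify the remaining delicacy of transferring i.i.d.\ estimates to the adaptively revealed environment (a point the paper itself treats tersely); but flagging it is not the same as closing it, and the excursion-counting issue above is the concrete missing piece.
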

\begin{proof}
Without loss of generality we prove $X_n$ is transient. By Lemma \ref{lem:environmentdef} the intersection environment dominates an i.i.d RWRE $\xi$. For some realization of $\xi$ let $\sigma^{i}_{-k}$ be i.i.d, with distribution $\tau_{-k}$, and $\sigma_1^i$ be i.i.d with distribution $\tau_1$, such that $\sigma^i_{-k}$ and $\sigma_1^i$ are coupled by the same random walk on $\xi$. Denote by $B[k,m]=\cap_{i=1}^{m}\{\sigma^i_1<\sigma_{-k}^i\}$.
By Corollary \ref{cor:backtracking},
\bae\label{eq:probofeventb}
\pr(B[k,m])\geq(1-2(\ev[\rho_0])^k)^m.
\eae
We obtain by first order approximations that
\bae\label{eq:backseveraltrialsbound}
\pr(B[n,n^4]^c)\leq1-(1-2(\ev[\rho_0])^n)^{n^4}\le 2n^4 \ev[\rho_0]^n,
\eae
thus
\bae
\pr(B[n,n^4]^c\text{ i.o})=0.
\eae

Since $L_n$ stochastically dominates the maximum of SRW, it is asymptotically larger than $n^\frac{1}{4}$.
Denote by $G_n$ the event \[G_n=\left\{L_n>n^\frac{1}{4}\right\},\]
and let $M_n$ be the maximum of a SRW and $\tau_a$ the first hitting time of a SRW in the vertex $a$.
 Thus
\bae\label{eq:nnupbnf}
\pr\left(G_n^c\right)\le\pr\left(M_n\le n^\frac{1}{4}\right)=\pr\left(\tau_{n^{1/4}}\ge n\right)\le\frac{1}{\sqrt{n}},
\eae
where the last inequality is Markov's inequality. Thus
\[
\pr\left(G_{n^4}^c\text{ i.o}\right)=0
.\]

\bae
\pr(TRSY^c)\le\pr\left(X_n\text{ backtracks from }L_n\text{ to }0\text{ i.o}\right).
\eae
Since between times $n^4$ to time $(n+1)^4$ there can be at most $n^4$ backtrack attempts,
\bae
\pr\left(X_n\text{ backtracks from }L_n\text{ i.o}\right)&=\pr\left(X_n\text{ backtracks from }L_n\text{ i.o}\right|\{G_{n^4}^c\text{ i.o}\}^c)\\
&\le\pr(B[n,n^4]^c\text{ i.o})=0.
\eae
\end{proof}


\subsection{Tightness bounds}\label{sec:tight}
\begin{definition}
We say that a sequence of random variables $\{Z_n\}_{n=1}^\infty$ is tight, if for
every $\varepsilon>0$ there exists some $M>0$ such that for all $n$
\[
\pr(|Z_n|>M)<\varepsilon.
\]
\end{definition}
In this section we begin the work to show that the process $\{X_n-Y_n\}_{n=1}^\infty$ is tight, and give bounds that will be useful in later sections.

We start with a simple lemma stating that both walks and both fronts are relatively close to each other.

\begin{lem}\label{lem:sqrt} For all $n$ large enough,
\begin{enumerate}
\item
$\prob(M_X(n)-X(n)>n^{0.6})<e^{-n^{0.1}}$
\item
$\prob(|Y(n)-X(n)|>n^{0.6})<e^{-n^{0.1}}$
\end{enumerate}
\end{lem}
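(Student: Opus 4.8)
The statement to prove bounds two deviations: the gap between a walk and its own front, and the gap between the two walks. Both should decay superpolynomially, and the natural engine for this is the backtracking control already established, together with the fact that the fronts move at a linear lower rate.

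\medskip

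\noindent\textbf{Plan of proof.} My plan is to deduce both parts from the backtracking estimate in \eqref{eq:backseveraltrialsbound} together with the linear growth of the left front $L_n$, exactly as in the proof of Proposition \ref{prop:MERWtransb}. For part (1), the quantity $M_X(n)-X(n)$ measures how far the walk $X$ has retreated from its running maximum. Writing this deviation as a backtracking event, I would argue that if $M_X(n)-X(n)>n^{0.6}$ then $X$ must have started from some site at height $M_X(n)$ and descended by more than $n^{0.6}$ before climbing back; since the intersection environment dominates an i.i.d.\ RWRE (Lemma \ref{lem:environmentdef}), each such descent is a backtrack of depth $k=n^{0.6}$ controlled by Corollary \ref{cor:backtracking}. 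The key quantitative input is that $\ev[\rho_0]<1$ (computed explicitly after Lemma \ref{lem:environmentdef}), so a single backtrack of depth $n^{0.6}$ has probability at most $2(\ev[\rho_0])^{n^{0.6}}$, which is $e^{-cn^{0.6}}$ for some $c>0$. A union bound over the at most $n$ possible starting times absorbs the polynomial factor and still leaves a bound of the form $n\,e^{-cn^{0.6}}<e^{-n^{0.1}}$ for $n$ large. The only care needed is that the walk associated to the front need not be $X$ itself; but since the environment domination is uniform over which walker is probing a site, the same RWRE comparison applies.

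\medskip

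\noindent\textbf{Part (2).} For the gap $|Y(n)-X(n)|$, the idea is to interpose the common front. Both $X_n$ and $Y_n$ lie below $R_n$, and I would control $R_n-X_n$ and $R_n-Y_n$ separately. The walk associated to the right front sits at $R_n$ (up to the running-maximum/current-position discrepancy handled in part (1)), while the other walk is at least $L_n$. The crucial geometric observation is that $R_n-L_n$ is itself a backtracking-type quantity: the lagging front can only fall far behind the leading front if its associated walk has retreated far from its own maximum, which again is a deep backtrack. Combined with part (1), which bounds each walk's distance from its own maximum, the triangle inequality $|Y_n-X_n|\le (M_X(n)-X_n)+(M_Y(n)-Y_n)+|M_X(n)-M_Y(n)|$ reduces everything to front deviations. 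I would show $|M_X(n)-M_Y(n)|=R_n-L_n$ is bounded by the same backtrack estimate, so that the event in part (2) is contained in a union of events each already controlled at scale $n^{0.6}$.

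\medskip

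\noindent\textbf{Main obstacle.} The hard part will be rigorously bounding $R_n-L_n$, the separation between the two fronts. Unlike a single walk's retreat from its own maximum, this involves the interaction between the two walkers through the shared cookie sites, and one must argue that the lagging front's walker genuinely sees an environment dominated by the i.i.d.\ RWRE when it attempts to catch up. The subtlety is that the leading walker may have eaten cookies that help the lagging one, so the comparison must be set up so that the lagging walker's backtracking (from $L_n$ downward, not from $R_n$) is the event being estimated. I expect the cleanest route is to bound $R_n - L_n$ by the maximal backtrack of the lagging walker from its own maximum over the time window, so that it collapses into the same Corollary \ref{cor:backtracking} estimate used for part (1), and then to choose the exponents ($0.6$ in the deviation, $0.1$ in the bound, and the implicit window length $n$) so that the union bound closes with room to spare. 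The exponent $0.6>1/2$ matters here because it must dominate the diffusive fluctuations of the fronts while still sitting in the exponentially-suppressed backtracking regime.
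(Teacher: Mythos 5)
Your treatment of the part of the deviation lying \emph{behind} the left front is essentially the paper's: a backtrack of depth $n^{0.6}$ (or even $n^{0.5}$) below $L_n$ is controlled by Lemma \ref{lem:rwrebacktrack} via the domination of Lemma \ref{lem:environmentdef}, a union bound over the at most $n$ excursions absorbs the polynomial factor, and the exponent $0.1$ is chosen with room to spare. That part is fine.

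The genuine gap is in your plan for $R_n-L_n$, which you yourself flag as the main obstacle and then propose to resolve by ``the maximal backtrack of the lagging walker from its own maximum.'' This is the wrong mechanism and the estimate would not close. A large value of $R_n-L_n$ does not require the lagging walker to have retreated at all: it can sit exactly at its own running maximum $L_n$ while the \emph{leading} walker's maximum runs far ahead. No backtracking event of the lagging walker is implied, so Corollary \ref{cor:backtracking} gives you nothing here. Worse, the RWRE domination of Lemma \ref{lem:environmentdef} only supplies a drift at sites \emph{both} walkers have visited, i.e.\ at sites $\le L_n$; in the strip between $L_n$ and $R_n$ the leading walker sees no doubly-eaten cookies, so the comparison you want to invoke is simply unavailable there. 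The correct (and paper's) mechanism is the opposite one: precisely because there is no drift beyond $L_n$, the process $\max(X_n,Y_n)-L(n)$ is dominated by a simple random walk, hence $R_n-L_n\le\max_{k\le n}\bigl(\max(X_k,Y_k)-L(k)\bigr)^+$ is dominated by the running maximum of an SRW, and the reflection principle together with Azuma's inequality gives $\prob(R_n-L_n>n^{0.6})<e^{-n^{0.1}}$ since $n^{0.6}\gg\sqrt{n}$. You need this diffusive upper bound on the leading front, not a backtracking bound on the lagging one; without it neither part (1) (for the segment of $M_X(n)-X(n)$ above $L_n$) nor part (2) is proved.
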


\begin{proof}
We first show that if $n$ is sufficiently large then
\begin{equation}\label{eq:x_left_front}
\prob(L(n)-X(n)>n^{0.5})<e^{-n^{0.1}}.
\end{equation}
Equivalently,
\begin{equation}\label{eq:y_left_front}
\prob(L(n)-Y(n)>n^{0.5})<e^{-n^{0.1}}.
\end{equation}

\eqref{eq:x_left_front} follows from Lemma \ref{lem:rwrebacktrack} and Lemma \ref{lem:environmentdef}.
Indeed, $L(n)-X(n)>n^{0.5}$ only if there exists a backtrack excursion up to time $n$ of length larger than
$\sqrt{n}$. Since there are at most $n$ such excursions, and we know that for each of those the probability of being
larger than $\sqrt{n}$ is exponentially small in $\sqrt{n}$, we get that for some constant $C$,
\begin{equation*}
\prob(L(n)-X(n)>n^{0.5})<ne^{-C\sqrt{n}}<e^{-n^{0.1}}
\end{equation*}
for $n$ large enough.

To finish the proof of the lemma, we need to show that
\begin{equation}\label{eq:between_fronts}
\prob(R(n)-L(n)>n^{0.6})<e^{-n^{0.1}}.
\end{equation}

To this end, we show that $R(n)-L(n)$ is dominated by the front of a simple random walk, and then
\eqref{eq:between_fronts} will follow immediately by Azuma's inequality and the reflection principle.

To see the domination, all we need to note is that $R(n)-L(n)\leq \max\{\max(X_k,Y_k)-L(k)^+:k=1,\ldots,n\}$ and that the evolution of
$\max(X_n,Y_n)-L(n)$ is dominated by that of a simple random walk.

\end{proof}

We define the event $A_n$ that the gaps did not grow faster than desired: For every $n$ i.e.
\[
A_n=
\{
\forall_{1\leq k \leq n} M_X(k)-X(k)<n^{0.6} \mbox{ and } |Y(k)-X(k)|<n^{0.6}
\}.
\]

Then the probability of $A_n^c$ decays stretched exponentially with $n$.

\begin{definition}
We say that $t$ is a {\em fresh epoch} if $R(t)=X(t)=Y(t)=R(t-1)+1$.
We denote by $F(t)$ the event that $t$ is a fresh epoch.
\end{definition}
The next lemma governs the probability of appearance of fresh epochs.

\begin{lem}\label{lem:fresh}
There exists a constant $C$ such that
\[
\prob
\left(\left.
\bigcup_{t=n}^{n+n^{0.7}}F(t)
\right|
X(1),\ldots,X(n-1),Y(1),\ldots,Y(n-1),A_{2n}
\right)>1-\exp(-Cn^{0.1})\text{ a.s}
.\]
\end{lem}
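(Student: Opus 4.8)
The plan is to show that within any window of length $n^{0.7}$ starting at time $n$, a fresh epoch occurs with overwhelming probability, by forcing the two walkers to synchronize at a common new record to the right. Conditioning on the event $A_{2n}$ keeps both walkers and both fronts within $n^{0.6}$ of each other throughout the window (this is exactly what Lemma \ref{lem:sqrt} buys us), so the geometry is controlled: the leading particle is never more than $O(n^{0.6})$ ahead of the trailing one. The window length $n^{0.7}$ is chosen to be much larger than $n^{0.6}$, giving ample time for the lagging walker to catch up and for both to advance past the current right front $R(n)$.

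The key steps, in order, are as follows. First I would fix the configuration at time $n$ and note that on $A_{2n}$ we have $R(n)-L(n)<n^{0.6}$ and $|X(n)-Y(n)|<n^{0.6}$. Second, I would observe that to create a fresh epoch it suffices to realize a specific ``good'' scenario inside the window: the trailing front climbs to meet the leading one, and then both particles, arriving together at the site $R+1$ just beyond the current maximum, step right onto a genuinely fresh site. Third, I would lower-bound the probability of this scenario. The crucial simplification is that any step onto a site never before visited by the \emph{other} walker carries no doubly-eaten cookie, so the relevant transition probability there is exactly $\tfrac12$; thus the coordinated advance past the record is driven by unbiased (or better) increments, and I can compare the motion to a pair of simple random walks. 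Fourth, I would use Azuma's inequality (as in the proof of Lemma \ref{lem:sqrt}) together with the reflection principle to show that, given $n^{0.7}$ available steps and only an $n^{0.6}$-sized gap to close, the probability that the walkers \emph{fail} to synchronize-and-advance in the window is at most $\exp(-Cn^{0.1})$, with the exponent $n^{0.1}$ emerging from the ratio of the squared gap scale to the window length.

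The main obstacle is the conditioning on the full past $X(1),\dots,X(n-1),Y(1),\dots,Y(n-1)$: because the cookie environment is endogenous, the increments after time $n$ depend on which sites each walker has previously fed the \emph{other}, so I cannot treat the post-$n$ dynamics as a clean RWRE. The way around this is to exploit that the transition probabilities always lie in $[\tfrac12,1]$ with bias only to the right, so every increment stochastically dominates a fair coin flip regardless of the history; this lets me couple the true process from below by simple random walks whose behaviour is independent of the conditioning, and the domination is uniform over the conditioning event. A secondary subtlety is ensuring that when both walkers reach the common record simultaneously they land on a truly fresh site (one visited by neither before), which is guaranteed precisely when they arrive together at $R+1$ for the first time; bookkeeping this ``simultaneous first arrival'' event, rather than merely ``both are at $R+1$,'' is where the definition of a fresh epoch $F(t)=\{R(t)=X(t)=Y(t)=R(t-1)+1\}$ must be matched exactly.

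Putting these together, I would conclude that the probability of the complement of $\bigcup_{t=n}^{n+n^{0.7}}F(t)$, conditioned on the history and on $A_{2n}$, is bounded by a single stretched-exponential term $\exp(-Cn^{0.1})$, uniformly in the conditioning, which is the claimed bound.
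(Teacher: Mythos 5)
There is a genuine gap, and it sits at the heart of your quantitative step. You propose to couple the trailing walker from below by a simple random walk (using only that every transition probability is at least $\tfrac12$) and then argue via Azuma and reflection that an $n^{0.6}$-sized gap is closed within the $n^{0.7}$-length window with probability $1-\exp(-Cn^{0.1})$. But a simple random walk needs time of order $(n^{0.6})^2=n^{1.2}$ to travel distance $n^{0.6}$, which vastly exceeds the window length $n^{0.7}$; under your coupling the probability of \emph{succeeding} in closing the gap is itself stretched-exponentially small (of order $\exp(-cn^{0.5})$), not the probability of failing. Your heuristic that the exponent $n^{0.1}$ ``emerges from the ratio of the squared gap scale to the window length'' computes to $n^{1.2}/n^{0.7}=n^{0.5}$ and, more importantly, bounds the wrong tail. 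The unbiased comparison is exactly the information you must \emph{not} throw away: the catch-up is only possible because the trailing walker has a ballistic drift on the region behind the left front, where the intersection environment dominates an i.i.d.\ RWRE with positive speed (Lemma \ref{lem:environmentdef}), and the probability that this RWRE fails to move ballistically is controlled by the large-deviation bound of Theorem \ref{thm:poszerodrift_ldp}. The only place where the fair-coin comparison is used in the paper is in the opposite direction, to show the \emph{right front} advances merely diffusively (it is dominated by the front of a reflected SRW), so that while the trailing walker covers a gap of size $g$ in time $O(g)$, the front moves only $O(g^{0.6})$ in that time.

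The paper's proof then iterates this tension through a sequence of stopping times $T_k$ at which the trailing walker reaches the previous record $M_k=R(T_{k-1})$: each round shrinks the gap from $n^{0.65^k}$ to roughly $n^{0.65^{k+1}}$ (Claim \ref{claim:uk}), and after boundedly many rounds the gap is a constant $M$, at which point a fixed cost $0.25^M$ forces both walkers to march right together onto a fresh site. This shows only that a fresh epoch occurs in a sub-window of length $Cn^{0.6}$ with probability bounded away from zero; the exponent $n^{0.1}$ in the lemma then comes from repeating this over the $\sim n^{0.1}$ disjoint sub-windows of length $Cn^{0.6}$ inside $[n,n+n^{0.7}]$, not from any single Gaussian-tail estimate. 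Your proposal is missing both of these ingredients: the RWRE drift that makes the catch-up ballistic, and the decomposition into many independent constant-probability trials that produces the stretched-exponential bound. Your observation about matching the definition of a fresh epoch (simultaneous first arrival at $R+1$) is correct but is the easy part; as written, the main estimate of your argument would fail.
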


\begin{proof}
The lemma will follow immediately once we prove that there exists a constant $C$ such that, under the same conditioning, the probability that there exists
a fresh epoch between $n$ and $n+Cn^{0.6}$ is bounded away from zero.

To this end we define the following sequence of stopping times:

We define $M_0 := R(n)$

Let $Z\in\{X,Y\}$ be the walk that corresponds to $R(n)$, and let $W$ be the other walk. Then $T_0 := \min\{ t: W(t) = M_0\}$.

Now define $M_1 := R(T_0)$.

We continue to recursively define $M_k$ and $T_k$ for all values of $k$.

We now need to make estimates on the random variables $M_k$ and $T_k$.

Define the event $U_k$:
\[
U_k := \{ T_{k+1}-T_k<n^{0.65^k}  \mbox{ and }
R(T_{k+1})-R(T_{k}) < (T_{k+1}-T_{k})^{0.6}     \}.
\]
We next estimate the probability of the event $U_k$.

\begin{claim}\label{claim:uk}
\[
\prob(U_k^c|U_1,\ldots,U_{k-1})<e^{-n^{0.1\cdot 0.65^{k}}}
\]
\end{claim}

We now assume Claim \ref{claim:uk} (We will prove it later) and finish the proof of Lemma \ref{lem:fresh}.

Let $M$ be large but fixed, and let $k_0=k_0(n)$ be the smallest number so that $n^{0.65^{k_0}}<M$.
Then by Claim \ref{claim:uk}, $\prob(U_1\cap U_2\cap\ldots\cap U_{k_0})$ is bounded away from zero.
Conditioned on the event $U_1\cap U_2\cap\ldots\cap U_{k_0}$, the probability a fresh epoch  existence
within the time frame $[n,n+Cn^{0.6}]$ is at least $0.25^M$ (probability for both the walks to move $M$ positions to the right is bigger than $0.25^M$).

\end{proof}
\begin{proof}[Proof of Claim \ref{claim:uk}]
Let $Z\in\{X,Y\}$ be so that $R(T_k)=M_Z(T_k)$, and let $W$ be the other element of $\{X,Y\}$.

Conditioned on the occurrence of $U_{k-1}$, the distance between $R(T_k)$ and $W(T_k)$ is
bounded by $Cn^{0.6^k}$, and therefore, by Lemma \ref{lem:environmentdef} and Theorem \ref{thm:poszerodrift_ldp} the probability that $T_{k+1}-T_k>n^{0.65^k}$ decays stretched-exponentially with
$n^{0.6^k}$. To control the probability that $R(T_{k+1})-R(T_k) \geq (T_{k+1} - T_k)^{0.6}$ we note that between times $T_k$ and $T_{k+1}$ the right front is dominated by the front of a reflected SRW.
\end{proof}

\subsection{Regeneration times}\label{subseq:regtime}

\begin{definition}\label{def:regtime}
We say that $t$ is a {\em regeneration time} if
\begin{enumerate}
\item $X_t=Y_t$.
\item For every $s>t$, $X_s>X_t$ and $Y_s>Y_t$.
\item For every $s<t$, $X_s<X_t$ and $Y_s<Y_t$.
\end{enumerate}
\end{definition}

Note that every regeneration time is a fresh epoch, but not vice versa.
On the other hand, we have the following two facts, which show that many fresh epochs are
indeed regeneration times.

\begin{lem}\label{lem:prob_reg}
Let $F(t)$ be the event that $t$ is a fresh epoch, and let $V(t)$ be the event that $t$ is a regeneration time. There exists a fixed $\rho=\rho(p)$, which is strictly between zero and one, such that for every $t$,
\[
\prob\big(V(t)|F(t)\big)=\rho.
\]
\end{lem}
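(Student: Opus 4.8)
The plan is to exploit the Markov-like structure at a fresh epoch together with the fact that the entire future of the process, conditioned on a fresh epoch, is determined by fresh cookies alone. Recall that at a fresh epoch $t$ we have $R(t)=X(t)=Y(t)=R(t-1)+1$, so both walks sit together at the current right front, and every site strictly to the right of this common position has never been visited by either walk. The condition (2) in Definition \ref{def:regtime} — that both walks stay strictly to the right of their common position $X_t=Y_t$ forever — is an event measurable with respect to the post-$t$ increments, which are governed by an i.i.d.\ collection of fresh cookies lying strictly to the right of $X_t$. The crucial observation is that whether the walks ever backtrack past their starting point does not depend on $t$ nor on the past trajectory, precisely because the cookie stacks they encounter going right are all fresh (all equal to $(\tfrac12,\tfrac12,p,p,\ldots)$ and unaten). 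Hence I would argue that condition (2) has a probability that is a fixed constant, independent of $t$, and strictly between $0$ and $1$.

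First I would set up the conditional law: conditioned on $F(t)$ and on the full past $X(1),\ldots,X(t),Y(1),\ldots,Y(t)$, the pair of walks restarted at time $t$ from the common point $z:=X_t=Y_t$ evolves exactly like a fresh MERW started from the origin (after translating by $z$), because all cookies at positions $\geq z$ are untouched and the dynamics \eqref{eq:modelrule} depend on the past only through the local cookie counts, which are reset to fresh values here. I would then identify condition (2) with the event that this freshly-restarted MERW never has either coordinate return to or fall below its starting point $z$ for all $s>t$; call its probability $\rho_1$. By translation invariance and the fresh-restart property, $\rho_1$ is a single number not depending on $t$. Condition (3), that both walks stayed strictly below $z$ for all $s<t$, is already implied on the event $F(t)$ together with the definition of the fronts, or can be absorbed into the event $F(t)$ itself since $t$ being a fresh epoch with $R(t)=R(t-1)+1$ forces the past trajectories to lie weakly below $R(t-1)=z-1<z$; I would make this implication explicit. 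Thus $\prob(V(t)\mid F(t))$ equals $\rho_1$, a constant.

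The two things to verify are that $\rho_1>0$ and $\rho_1<1$. For $\rho_1<1$ it suffices to note there is positive probability that, say, $X$ steps left on its first move (probability $\tfrac12$ since the first cookie is $\tfrac12$), which violates condition (2); this bound is uniform in $t$. For $\rho_1>0$, I would invoke transience together with the positive-speed machinery being developed: by Proposition \ref{prop:MERWtransb} both walks are transient to $+\infty$, and the RWRE domination of Lemma \ref{lem:environmentdef} with $\ev[\rho_0]<1$ shows that the annealed probability of never backtracking past the origin is bounded below by a positive constant. Concretely, using the backtracking estimate of Lemma \ref{lem:rwrebacktrack} and Corollary \ref{cor:backtracking}, the probability that the front-associated walk never returns to $z$ is at least $\prod_{k\geq 1}(1-2(\ev[\rho_0])^k)>0$, and one controls the lagging walk via the tightness bounds of Lemma \ref{lem:sqrt}. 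Assembling these gives $0<\rho_1<1$, and setting $\rho=\rho_1$ completes the proof.

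The main obstacle I anticipate is the careful verification that the conditional law after a fresh epoch is genuinely that of a freshly-restarted MERW, \emph{and} that condition (2) for \emph{both} walkers simultaneously (not just the front-associated one) has positive probability. The single-walk non-backtracking estimate is standard RWRE, but here the two walks are coupled through their mutual cookie consumption, so I must argue that the lagging walk $W$ also never returns below $z$ with positive probability. This requires combining the positive-speed/transience results with a uniform-in-$t$ lower bound, handling the dependence between the two trajectories rather than treating each walk in isolation.
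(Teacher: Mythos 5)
Your overall architecture matches the paper's: restart at the fresh epoch using the fact that $V(t)$ depends only on the trajectory to the right of the common position, note that condition (3) is automatic on $F(t)$, get $\rho<1$ from a single left step, and get $\rho>0$ from the RWRE backtracking estimates. The constancy and $\rho<1$ parts are fine. The gap is in your lower bound $\rho>0$.

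The specific quantity you propose, $\prod_{k\geq 1}\bigl(1-2(\ev[\rho_0])^k\bigr)$, is not a positive number: for the environment of Lemma \ref{lem:environmentdef} one has $\ev[\rho_0]=p(2-p)+\tfrac{(1-p)^3}{p}>\tfrac34$ for every $p\in(\tfrac12,1)$, so the $k=1$ factor $1-2\ev[\rho_0]$ is negative and the small-$k$ factors are useless. The bound $\pr(\tau_{-k}<\tau_1)\le 2(\ev\rho_0)^k$ only bites for large $k$, so one cannot rule out short backtracks below the starting point this way; yet any backtrack of length $1$ already destroys condition (2). The paper's proof supplies exactly the missing ingredient: first pay a fixed positive probability $\alpha>0$ for the event that within $3M$ steps both walkers doubly eat every cookie in $[0,M]$ without ever stepping below $0$, so the intersection environment is constant $p$ on $[0,M]$; only \emph{then} does it invoke the events $B[n,n^4]$ for $n>3M$, where $M$ is chosen so large that $\sum_{n\ge 3M}2n^4(\ev[\rho_0])^n<1$ by \eqref{eq:backseveraltrialsbound}. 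Without this seeding-and-truncation step your product has no positive lower bound. Note also that the paper's formulation in terms of ``no backtracking from $L_n$ to $0$'' controls \emph{both} walkers at once through the left front, whereas you explicitly flag the lagging walk as an unresolved obstacle and only gesture at Lemma \ref{lem:sqrt}; tightness of $|X_n-Y_n|$ by itself does not give a uniform positive probability that the lagging walk never dips below the fresh point, so this part of your argument is incomplete as stated.
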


\begin{proof}
Denote by $\rho=\pr(V(0))$. Notice that the event that a fresh epoch is a regeneration time is dependent only on the trajectory of the MERW to the right of the MERW at the fresh epoch. Thus for every $t$, $\prob\big(V(t)|F(t)\big)=\pr(V(0))$.  By Proposition \ref{prop:MERWtransb} the MERW is transient and thus visits the origin finitely many times a.s. Clearly, $\rho<1$ since there is a probability greater than $1-p^2$ for at least one of the walkers to turn left and thus admit $V(0)^c$. We are left with proving $\rho>0$. Let $M\in\BN$, then with positive probability, $\alpha>0$, after $3M$ steps, the intersection environment of the MERW in $[0,M]$ is constant $p$. By the same argument as in the proof of Proposition \ref{prop:MERWtransb}
\bae
\pr(V(0))&>\alpha\pr(\forall n>3M\text{ no backtracking from }L_n\text{ to }0)\\
&\ge\alpha\pr\left(\cap_{n>3M}B[n,n^4]\right)\ge1-\sum_{n=3m}^{\infty}\pr(B[n,n^4]^c)>0,
\eae
where the last inequality holds for $M$ large enough by \eqref{eq:backseveraltrialsbound}.

\end{proof}

\begin{lem}\label{lem:mix_prob_reg}
For every $n$, let $t_n$ be the first fresh epoch after time $n$. For every $k<n^{0.2}$ let $h(k)=n+kn^{0.8}$. Let
\bae B_n&=\bigcap_{k=1}^{n^{0.2}}\left\{\exists\text{at least } n^{0.1}\text{ fresh epochs between time }h(k)\text{ and }h(k+1)\right\}\\
Q(k)&=V(t_{h(k)})
\eae Then for every $n$ big enough
\[
\pr\left(Q(k)\left| \cap_{j<k}Q(j)^c,B_n,A_{2n}\right.\right)>\rho/2.
\]
\end{lem}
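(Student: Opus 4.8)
The plan is to exploit the strong Markov property at the stopping time $t_{h(k)}$, which is by construction a fresh epoch, together with the observation that on the event $V(t_{h(k)})$ the earlier failures $Q(j)^c$, $j<k$, are frozen in the past. First I would record that $t_{h(k)}$ is the first fresh epoch after $h(k)$, so $F(t_{h(k)})$ holds on the conditioning event (which is contained in $B_n$, guaranteeing such an epoch exists before $h(k{+}1)\le 2n$, so that $A_{2n}$ is the relevant gap event). At a fresh epoch both walks sit together at a new maximum $R(t_{h(k)})$ with pristine cookies to the right, so conditionally on $\mathfs{F}_{t_{h(k)}}$ the future is a fresh MERW started from level $R(t_{h(k)})$, and by Lemma \ref{lem:prob_reg}, $\pr\big(V(t_{h(k)})\mid\mathfs{F}_{t_{h(k)}}\big)=\rho$.

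The engine of the argument is the following remark. On $V(t_{h(k)})$ both walks stay strictly above $R(t_{h(k)})\ge R(t_{h(j)})$ for all times after $t_{h(k)}$, so a walk can never drop to or below $R(t_{h(j)})$ after $t_{h(k)}$. Hence for $j<k$ the failure $Q(j)^c$ (some walk returns to $\le R(t_{h(j)})$ after $t_{h(j)}$) can only be witnessed at a time $\le t_{h(k)}$. Writing $\tilde Q_j$ for the $\mathfs{F}_{t_{h(k)}}$-measurable event that such a return already occurs in $(t_{h(j)},t_{h(k)}]$, and $\tilde E=\bigcap_{j<k}\tilde Q_j$, this yields the exact identity $Q(k)\cap\bigcap_{j<k}Q(j)^c=V(t_{h(k)})\cap\tilde E$ with $\tilde E\in\mathfs{F}_{t_{h(k)}}$. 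Combined with the previous paragraph, $\pr\big(V(t_{h(k)})\cap\tilde E\big)=\rho\,\pr(\tilde E)$, and the numerator $\pr\big(Q(k)\cap\bigcap_{j<k}Q(j)^c\cap B_n\cap A_{2n}\big)$ is bounded below by $(\rho-o(1))\,\pr(\tilde E\cap B_n^{p}\cap A_{2n}^{p})$, where $B_n^{p},A_{2n}^{p}$ denote the parts of $B_n,A_{2n}$ measurable before $t_{h(k)}$: indeed, on $V(t_{h(k)})$ the restarted walk is a fresh MERW, so by Lemmas \ref{lem:fresh} and \ref{lem:sqrt} the future parts of $B_n$ and $A_{2n}$ hold with conditional probability $1-o(1)$.

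It remains to bound the denominator $\pr\big(\bigcap_{j<k}Q(j)^c\cap B_n\cap A_{2n}\big)$ from above by $(1+o(1))\,\pr(\tilde E\cap B_n^{p}\cap A_{2n}^{p})$. Since $B_n\cap A_{2n}\subseteq B_n^{p}\cap A_{2n}^{p}$, the only excess of $\bigcap_{j<k}Q(j)^c$ over $\tilde E$ comes from indices $j_0$ whose fresh epoch $t_{h(j_0)}$ is undone only by a backtrack occurring after $t_{h(k)}$. Such a backtrack must cross the accumulated front progress $R(t_{h(k)})-R(t_{h(j_0)})$; on $B_n$ each window contributes at least $n^{0.1}$ fresh epochs and each fresh epoch raises the front by at least one, so this distance is at least $n^{0.1}$, and Lemma \ref{lem:rwrebacktrack} (via the RWRE domination of Lemma \ref{lem:environmentdef}) bounds the probability of such a backtrack by a quantity stretched-exponentially small in $n^{0.1}$. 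Crucially one must keep the product-over-windows structure rather than a crude union bound: on the conditioning event \emph{all} $k-1$ windows fail, so the in-window failure probabilities ($\approx 1-\rho$) cancel between the excess term and $\pr(\tilde E)$, and because $k\le n^{0.2}$ the total excess is at most $n^{0.2}$ times the stretched-exponential factor, hence a $o(1)$ fraction of $\pr(\tilde E\cap B_n^{p}\cap A_{2n}^{p})$. Dividing the two bounds yields $\pr\big(Q(k)\mid\bigcap_{j<k}Q(j)^c,B_n,A_{2n}\big)\ge\rho(1-o(1))\ge\rho/2$ for $n$ large.

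The main obstacle is exactly this last step: controlling, inside the extremely rare event that every previous attempt fails, the contribution of attempts that fail only through a far-future backtrack. The difficulty is that this conditioning event is itself rarer than a naive union bound over far backtracks would suggest, so one cannot merely show the far-backtrack probability is small in absolute terms; one must instead show it is small \emph{relative} to $\pr(\tilde E)$. This forces a careful use of the approximate independence of distinct windows (justified by the $n^{0.8}$ separation, the front progress guaranteed by $B_n$, and the gap control of $A_{2n}$), so that the constant in-window failure factors cancel and only the genuinely small far-backtrack factor, weighed against the at most $n^{0.2}$ windows, survives.
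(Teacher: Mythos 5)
Your proposal follows essentially the same route as the paper: your event $\tilde E$ (all past failures already witnessed by time $t_{h(k)}$) plays the role of the paper's event $H(k)$, and both arguments then apply Lemma \ref{lem:prob_reg} at the fresh epoch $t_{h(k)}$ and control the residual ``failure witnessed only by a backtrack after $t_{h(k)}$'' event using the front progress guaranteed by $B_n$ together with the backtrack estimates. If anything, you are more explicit than the paper about the delicate point --- that this residual event must be shown small \emph{relative} to the rare conditioning event $\cap_{j<k}Q(j)^c$ rather than in absolute terms --- which the paper asserts without further justification as its inequality \eqref{eq:hkbound}, and you treat all $j<k$ uniformly where the paper's $H(k)$ explicitly addresses only $j=k-1$.
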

\begin{proof}
Let $H(k)=\left\{\exists t\in\left[t_{h(k-1)}+1,h(k)\right], \min\{X_t,Y_t\}=X_{t_{h(k-1)}}\right\}$, thus \bae
\pr\left(Q(k)\left| \cap_{j<k}Q(j)^c,B_n,A_{2n}\right.\right)&=\pr\left(Q(k)\cap H(k)\left| \cap_{j<k}Q(j)^c,B_n,A_{2n}\right.\right) \\
&+\pr\left(Q(k)\cap H(k)^c\left| \cap_{j<k}Q(j)^c,B_n,A_{2n}\right.\right)\\
&\geq \pr\left(Q(k)\left| \cap_{j<k}Q(j)^c,B_n,A_{2n},H(k)\right.\right)\pr(H(k)| \cap_{j<k}Q(j)^c,B_n,A_{2n})
.\eae
Given $B_n$ the probability of $H(k)^c$ is stretch exponentially small, i.e.
\bae\label{eq:hkbound}
\pr(H(k)^c|\cap_{j<k}Q(j)^c,B_n)<e^{-cn^{0.1}}
.\eae
Under $H(k)$ the probability of a regeneration at $t_{h(k)}$ is independent of $\cap_{j<k}Q(j)^c$.
By lemma \ref{lem:prob_reg} and \eqref{eq:hkbound}, for $n$ large enough
\[
\pr\left(Q(k)\left| \cap_{j<k}Q(j)^c,B_n,A_{2n},H(k)\right.\right)\pr(H(k)| \cap_{j<k}Q(j)^c,B_n,A_{2n})>\rho/2.
\]
.\end{proof}

\begin{thm}\label{thm:regtime}
There exists a constant $\xi>0$ such that for all large enough $n$,
\[
\prob\left(\bigcap_{k=1}^nV^c(k)\right)<e^{-n^\xi}.
\]
\end{thm}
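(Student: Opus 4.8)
The plan is to leverage the mixing estimate of Lemma~\ref{lem:mix_prob_reg}, which lower-bounds the conditional probability of a regeneration at each checkpoint fresh epoch, to show that the probability of seeing no regeneration among a polynomial family of checkpoints decays like $(1-\rho/2)$ raised to the power $n^{0.2}$. The one indexing subtlety is that Lemma~\ref{lem:mix_prob_reg} places its checkpoints $t_{h(k)}$, $1\le k\le m^{0.2}$, in the window $[m,2m]$ (where $h(k)=m+km^{0.8}$), whereas the theorem concerns regenerations in $\{1,\dots,n\}$. I would therefore apply the lemma with $m=\lfloor n/2\rfloor$, so that $2m\le n$ and, on the event $B_m$, every first fresh epoch $t_{h(k)}$ after $h(k)$ occurs before $h(k+1)\le 2m\le n$; thus each $t_{h(k)}\in\{1,\dots,n\}$ and the event $Q(k)=V(t_{h(k)})$ asserts that one specific time in $\{1,\dots,n\}$ is a regeneration time. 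In particular, on $B_m$ the event $\bigcap_{k=1}^n V^c(k)$ forces $\bigcap_{k=1}^{m^{0.2}}Q(k)^c$.

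First I would isolate the high-probability conditioning. Writing $G=B_m\cap A_{2m}$, the containment just noted gives, on $G$, that $\bigcap_{k=1}^n V^c(k)\subseteq\bigcap_{k=1}^{m^{0.2}}Q(k)^c$, so that
\[
\pr\Big(\bigcap_{k=1}^n V^c(k)\Big)\le \pr(B_m^c)+\pr(A_{2m}^c)+\pr\Big(\bigcap_{k=1}^{m^{0.2}}Q(k)^c\ \Big|\ G\Big).
\]
The last term I would expand by the chain rule into $\prod_{k=1}^{m^{0.2}}\pr\big(Q(k)^c\mid\bigcap_{j<k}Q(j)^c,G\big)$, and bound each factor using Lemma~\ref{lem:mix_prob_reg} by $1-\rho/2$; since $G=B_m\cap A_{2m}$ matches the conditioning there (with the lemma's parameter equal to our $m$), this yields $(1-\rho/2)^{m^{0.2}}=\exp(-cm^{0.2})$ with $c=-\log(1-\rho/2)>0$.

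It then remains to check that $\pr(B_m^c)$ and $\pr(A_{2m}^c)$ are themselves stretched-exponentially small. The bound on $\pr(A_{2m}^c)$ follows from Lemma~\ref{lem:sqrt} by a union bound over the at most $2m$ relevant times, as already remarked after the definition of $A_n$. For $\pr(B_m^c)$ I would use Lemma~\ref{lem:fresh}: each inter-checkpoint window $[h(k),h(k+1)]$ has length $m^{0.8}$ and splits into roughly $m^{0.1}$ disjoint sub-windows of length $m^{0.7}$, and Lemma~\ref{lem:fresh} produces a fresh epoch in each with probability $1-\exp(-Cm^{0.1})$ conditionally on the past; iterating over the sub-windows and then union-bounding over the $m^{0.2}$ windows shows that the chance of fewer than $m^{0.1}$ fresh epochs in some window is stretched-exponentially small. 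Taking $\xi$ strictly below the smallest exponent produced above (all positive powers of $n$ once $m=\lfloor n/2\rfloor$ is substituted) gives the stated bound. The main obstacle is precisely this last step: Lemma~\ref{lem:fresh} is a single-window, singly-conditioned statement, and turning it into a count of at least $m^{0.1}$ fresh epochs per window requires carefully chaining the conditional estimates so that the successive conditionings stay within the regime where Lemma~\ref{lem:fresh} applies.
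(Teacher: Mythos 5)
Your proposal follows essentially the same route as the paper: condition on $B$ and $A$, chain Lemma~\ref{lem:mix_prob_reg} over the $n^{0.2}$ checkpoint fresh epochs via the chain rule to obtain $(1-\rho/2)^{n^{0.2}}$, and absorb the conditioning events through the stretched-exponential bounds on their complements. Your shift of the checkpoint window into $[m,2m]$ with $m=\lfloor n/2\rfloor$ cleanly repairs an indexing slip in the paper (whose checkpoints $t_{h(k)}$ as written lie in $[n,2n]$, outside the range $\{1,\dots,n\}$ of the theorem), and your sketch of the bound on $\pr(B_m^c)$ via Lemma~\ref{lem:fresh} supplies detail the paper leaves implicit.
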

\begin{proof}
\bae
\pr\left(\bigcap_{k=1}^nV^c(k)\bigg|B_n,A_{2n}\right)&\leq\pr\left(\bigcap_{k=1}^{n^{0.2}}Q(k)^c\bigg|B_n,A_{2n}\right)\\
&=\prod_{j=1}^{n^{0.2}}\pr\left(Q(j)^c\bigg|B_n,A_{2n},\bigcap_{k=1}^{j}Q(k)^c\right)    \\
&\leq (1-\rho/2)^{n^{0.2}}.
\eae
Thus
\bae\label{eq:regstretchexp}
\pr\left(\bigcap_{k=1}^nV^c(k)\right)\leq \pr\left(\bigcap_{k=1}^nV^c(k)\bigg|B_n,A_{2n}\right)+\pr((B_n\cap A_{2n})^c)
\eae
and the RHS of \eqref{eq:regstretchexp} decays stretch exponentially.

\end{proof}

\begin{cor}
Let $\tau$ be a regeneration time then $\ev[X_\tau]<\infty$.
\end{cor}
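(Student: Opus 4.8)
The plan is to reduce the assertion about $X_\tau$ to a tail estimate for the first regeneration time itself and then quote Theorem \ref{thm:regtime}. Write $\tau$ for the first (positive) regeneration time. Since both walks start at the origin and move in steps of size one, and since condition 3 of Definition \ref{def:regtime} forces $X_0=0<X_\tau$, we have the deterministic bounds $0<X_\tau\le\tau$. Consequently $\ev[X_\tau]\le\ev[\tau]$, so it suffices to show $\ev[\tau]<\infty$.

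Next I would express the tail of $\tau$ in terms of the events $V(k)$ appearing in Theorem \ref{thm:regtime}. Because $\tau$ is the \emph{first} regeneration time, the event $\{\tau>n\}$ coincides with the event that none of the times $1,\dots,n$ is a regeneration time, that is,
\[
\{\tau>n\}=\bigcap_{k=1}^{n}V^c(k).
\]
Theorem \ref{thm:regtime} then yields, for all $n$ large enough and with the fixed $\xi>0$ provided there, the stretched-exponential bound $\prob(\tau>n)<e^{-n^{\xi}}$.

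Finally I would sum the tail. Choosing $N$ large enough for the bound of Theorem \ref{thm:regtime} to apply, one gets
\[
\ev[\tau]=\sum_{n=0}^{\infty}\prob(\tau>n)\le N+\sum_{n\ge N}e^{-n^{\xi}}<\infty,
\]
since a stretched-exponential tail is summable. Combined with $\ev[X_\tau]\le\ev[\tau]$ from the first step, this gives $\ev[X_\tau]<\infty$, which is exactly the claim.

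I do not expect any genuine obstacle in this argument; it is essentially the standard passage from a stretched-exponential tail to a finite mean. The only two points that deserve a moment of care are the identification $\{\tau>n\}=\bigcap_{k=1}^{n}V^c(k)$, which relies on $\tau$ being the first regeneration time, and the elementary deterministic bound $X_\tau\le\tau$ coming from the unit step size; both are immediate from the definitions.
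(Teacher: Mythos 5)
Your proof is correct and follows exactly the paper's route: the paper also bounds $|X_\tau|<\tau$ by the unit step size and then invokes Theorem \ref{thm:regtime} to conclude $\ev[X_\tau]<\ev[\tau]<\infty$. You have merely spelled out the tail-summation step that the paper leaves implicit.
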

\begin{proof}
$|X_\tau|<\tau$ thus by theorem \ref{thm:regtime}
\[
\ev[X_\tau]<\ev[\tau]<\infty.
\]
\end{proof}

\begin{lem}\label{lem:regiid}
Let $\tau_1<\tau_2<\ldots$ be all the regeneration times, then $\{\tau_{i+1}-\tau_i\}_{i\in\mathbb{N}}$ are i.i.d.
\end{lem}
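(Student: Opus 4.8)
The plan is to establish the classical renewal structure underlying Definition \ref{def:regtime}: at a regeneration time the two walks sit together at a site that is a fresh common maximum, so that---on the event that they never return to that level---their future evolution is governed only by the untouched cookies to the right and is therefore independent of the past. I first note that the gaps are a.s.\ finite and well defined, since the walks are transient (Proposition \ref{prop:MERWtransb}) and there are infinitely many regeneration times a.s.\ (Theorem \ref{thm:regtime}). Next, whether $t$ is a fresh epoch is $\mathfs{F}_t$-measurable, so the successive fresh epochs $\sigma_1<\sigma_2<\cdots$ are stopping times. At $\sigma_j$ both walks occupy $m_j:=R(\sigma_j)=R(\sigma_j-1)+1$, a site reached for the first time; hence every site $\ge m_j$ carries uneaten cookies, and, because $R(\sigma_j-1)=m_j-1$ forces both walks strictly below $m_j$ before $\sigma_j$, conditions (1) and (3) of Definition \ref{def:regtime} hold automatically. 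Writing $\tilde X^{(j)}_s=X_{\sigma_j+s}-m_j$ and $\tilde Y^{(j)}_s=Y_{\sigma_j+s}-m_j$, the epoch $\sigma_j$ is a regeneration time iff condition (2) holds, i.e.\ iff $\tilde D_j:=\{\forall s\ge1:\ \tilde X^{(j)}_s>0 \text{ and } \tilde Y^{(j)}_s>0\}$ occurs; moreover the configuration seen from $m_j$---positions together with all visit-counts at sites $\ge m_j$---coincides exactly with the fresh initial configuration of a MERW started with both walks at $0$.

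I would then prove the \emph{renewal identity}. Let $\hat\pr$ be the law of a MERW started with both walks at $0$ and uneaten cookies everywhere, and let $\hat D=\{\forall s\ge1:\ \hat X_s>0,\ \hat Y_s>0\}$. On $\hat D$ the walks visit only sites $\ge 0$, so the restriction of $\hat\pr$ to $\hat D$ depends solely on the cookies on $\{z\ge0\}$ and is insensitive to those on $\{z<0\}$. Conditioned on $F(\sigma_j)$, the strong Markov property at the stopping time $\sigma_j$ presents the shifted process $(\tilde X^{(j)},\tilde Y^{(j)})$ as a MERW from $0$ with fresh cookies to the right and some eaten cookies to the left; since the left cookies are never seen on $\tilde D_j$, one concludes that, conditioned on $V(\sigma_j)=\tilde D_j$, the shifted trajectory has law $\hat\pr(\cdot\mid\hat D)$, \emph{independently of} $\mathfs{F}_{\sigma_j}$.

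The passage to i.i.d.\ increments then rests on the observation that $\tau_{i+1}-\tau_i$ is a deterministic functional of the post-$\tau_i$ trajectory alone: it equals the first regeneration time of the shifted process $(X_{\tau_i+\cdot}-X_{\tau_i},\,Y_{\tau_i+\cdot}-Y_{\tau_i})$, because conditions (1) and (2) at $\tau_{i+1}$ involve only times after $\tau_i$, while condition (3) at times $\le\tau_i$ is automatic (both walks lie at most at $m_i<m_{i+1}$ there). Combining this with the renewal identity and inducting on $i$, each gap $\tau_{i+1}-\tau_i$ has the law of $\tau_1$ under $\hat\pr(\cdot\mid\hat D)$ and is independent of $(\tau_1,\dots,\tau_i)$, which is the assertion.

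The only genuinely delicate point---and the step I expect to be the main obstacle---is that a regeneration time is \emph{not} a stopping time, since condition (2) inspects the entire future, so the strong Markov property cannot be invoked at $\tau_i$ directly. I would handle this exactly as in Sznitman--Zerner: realise $\tau_1$ as the terminal term of an alternating sequence of genuine stopping times $\sigma^{(1)}\le\varrho^{(1)}\le\sigma^{(2)}\le\cdots$, where $\sigma^{(k)}$ is the next fresh epoch and $\varrho^{(k)}$ the first later time that some walk returns to level $m_{\sigma^{(k)}}$, and declare $\sigma^{(k)}$ a regeneration precisely when $\varrho^{(k)}=\infty$. Each attempt succeeds, conditionally on the past, with the fixed probability $\rho=\hat\pr(\hat D)>0$ of Lemma \ref{lem:prob_reg}; summing the strong Markov property over the (geometrically many) failed attempts then yields both the conditional law $\hat\pr(\cdot\mid\hat D)$ of the post-$\tau_1$ trajectory and its independence of the past, making the renewal identity---and hence the lemma---rigorous.
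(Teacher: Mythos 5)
Your proof is correct and rests on the same renewal idea as the paper's: after a regeneration the walks never revisit the past, so the shifted trajectory is independent of it, and each regeneration restarts both walks together at a fresh site with only uneaten cookies to the right, giving identically distributed gaps. The paper's proof is only a two-sentence sketch of this; your Sznitman--Zerner decomposition into alternating stopping times is the standard way to make it rigorous, and it correctly isolates the one delicate point (regeneration times are not stopping times, so the strong Markov property cannot be applied at $\tau_i$ directly) that the paper passes over in silence.
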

\begin{proof}
Let $\tau<\sigma$ be a two regeneration times. $\{X_{\tau+t}\},\{Y_{\tau+T}\}$ are independent of all the trajectories to the left of $\{X_\tau\}$, thus $\sigma-\tau$ is independent of $\tau$. Since after every regeneration time the process runs on the same environment the regeneration differences $\{\tau_{i+1}-\tau_i\}_{i\in\BN}$ are identically distributed.
\end{proof}

Using Theorem \ref{thm:regtime}, Lemma \ref{lem:regiid} and the law of large
numbers we have
\begin{equation}\label{eq:speedregrepresentation}
v\stackrel{{\rm
def}}{=}\lim_{i\rightarrow\infty}\frac{X_{\tau_i}}{\tau_i}=\lim_{i\rightarrow\infty}\frac{X_{\tau_i}}{i}\cdot\frac{i}{\tau_i}\stackrel{LLN}{=}\frac{\ev[X_{\tau_{2}}-X_{\tau_1}]}{\ev[\tau_{2}-\tau_1]}
\end{equation}
For every $n$ there exists some $i(n)$ such that
$\tau_{i(n)}<n<\tau_{i(n)+1}$ and satisfy $X_{\tau_{i(n)}}\leq X_n
\leq X_{\tau_{i(n)+1}}$. thus
\begin{equation}v{\underset{\infty\leftarrow n}{\longleftarrow}} \frac{X_{\tau_{i(n)}}}{\tau_{i(n)}}\cdot \frac{\tau_{i(n)}}{n}\leq\frac{X_n}{n}\leq
\frac{X_{\tau_{i(n)+1}}}{\tau_{i(n)+1}}\cdot\frac{\tau_{i(n)+1}}{n}{\underset{n\rightarrow\infty}{\longrightarrow}}
v
\end{equation}
This yields the main theorem of this section
\begin{thm}\label{thm:converspeed}
For every $\frac{1}{2}<p<1$, $\{X_n\}$ and $\{Y_n\}$ have positive speed a.s. i.e
\[
\lim_{n\rightarrow\infty}\frac{X_n}{n}=\lim_{n\rightarrow\infty}\frac{Y_n}{n}=v>0,~{\rm
a.s}.
\]
\end{thm}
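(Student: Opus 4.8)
The plan is to run a renewal argument built on the regeneration structure already assembled. First I would establish that almost surely there are infinitely many regeneration times $\tau_1<\tau_2<\cdots$. This is immediate from Theorem \ref{thm:regtime}: the probability that no regeneration occurs among the first $n$ epochs is at most $e^{-n^\xi}$, which is summable, so by Borel--Cantelli regeneration times keep recurring and the sequence $\{\tau_i\}$ is a.s. infinite. I would then invoke Lemma \ref{lem:regiid} to regard the gaps $\{\tau_{i+1}-\tau_i\}_i$, together with the horizontal displacements $\{X_{\tau_{i+1}}-X_{\tau_i}\}_i$, as an i.i.d. sequence.

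Next I would check the integrability needed for the law of large numbers. The stretched-exponential tail supplied by Theorem \ref{thm:regtime} yields $\ev[\tau_2-\tau_1]<\infty$, and since $|X_{\tau_{i+1}}-X_{\tau_i}|\le \tau_{i+1}-\tau_i$ the horizontal increment inherits a finite mean as well. The strong law of large numbers applied to numerator and denominator then gives, a.s.,
\[
\frac{X_{\tau_i}}{\tau_i}=\frac{X_{\tau_i}/i}{\tau_i/i}\longrightarrow \frac{\ev[X_{\tau_2}-X_{\tau_1}]}{\ev[\tau_2-\tau_1]}=:v,
\]
which is precisely the representation \eqref{eq:speedregrepresentation}.

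For positivity I would use the defining conditions (2)--(3) of a regeneration time, which force $X_{\tau_{i+1}}>X_{\tau_i}$ strictly; as the steps are integer-valued this means $X_{\tau_{i+1}}-X_{\tau_i}\ge 1$, so $\ev[X_{\tau_2}-X_{\tau_1}]\ge 1$ and hence $v>0$. To upgrade the limit along the sparse subsequence $\{\tau_i\}$ to the full limit, I would sandwich: for each $n$ choose $i(n)$ with $\tau_{i(n)}\le n<\tau_{i(n)+1}$, so that monotonicity along regeneration times gives $X_{\tau_{i(n)}}\le X_n\le X_{\tau_{i(n)+1}}$. Since $\tau_i/i\to \ev[\tau_2-\tau_1]$ forces consecutive gaps to be negligible relative to $n$, both $\tau_{i(n)}/n$ and $\tau_{i(n)+1}/n$ tend to $1$, and the two bounding ratios both converge to $v$, yielding $X_n/n\to v$.

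Finally, because $X_{\tau_i}=Y_{\tau_i}$ at each regeneration time and the identical sandwich applies to $Y$ with the same regeneration times, the same argument delivers $Y_n/n\to v$ with the same $v$. The main obstacle I anticipate is the integrability step: one must convert the bound on $\prob\left(\bigcap_{k=1}^n V^c(k)\right)$ into a genuine tail estimate on a single gap $\tau_{i+1}-\tau_i$ (noting that ``no regeneration in $n$ steps'' does control the gap length), and one must confirm that Lemma \ref{lem:regiid} really renders the renewal sequence i.i.d. rather than merely independent with an atypical first term, so that the law of large numbers applies verbatim.
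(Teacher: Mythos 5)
Your proposal is correct and follows essentially the same route as the paper: the paper derives the speed representation \eqref{eq:speedregrepresentation} from Theorem \ref{thm:regtime}, Lemma \ref{lem:regiid} and the law of large numbers, then uses the identical sandwich $X_{\tau_{i(n)}}\leq X_n\leq X_{\tau_{i(n)+1}}$ to pass to the full limit. Your additional remarks on integrability of the gaps and on positivity via $X_{\tau_{i+1}}-X_{\tau_i}\geq 1$ are exactly the (implicit) ingredients the paper relies on.
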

\begin{prop}
For $\frac{1}{2}\leq p<1$ The speed $v$ is smaller than $\frac{1}{1+\frac{2}{2p-1}}$.
\end{prop}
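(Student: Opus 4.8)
The plan is to reduce the bound to a comparison with an i.i.d.\ RWRE and to exploit one structural feature of the MERW: a walker can only profit from the drift $p$ at a site that the \emph{other} walker has already eaten twice. First I would record the elementary observation that $\frac{1}{1+\frac{2}{2p-1}}=\frac{2p-1}{2p+1}$ is precisely the speed, given by Theorem \ref{thm:rwrespeed}, of the i.i.d.\ RWRE whose environment takes the value $p$ with probability $\tfrac12$ and the value $\tfrac12$ with probability $\tfrac12$: indeed for that environment $\ev[\rho_0]=\frac12\cdot\frac{1-p}{p}+\frac12=\frac{1}{2p}$ and $\ev\left[\frac{1}{\omega_0}\right]=\frac12\cdot\frac1p+1=\frac{2p+1}{2p}$, so $\frac{1-\ev[\rho_0]}{\ev[1/\omega_0]}=\frac{2p-1}{2p+1}$. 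Thus it suffices to show that $\{X_n\}$ is no faster than this RWRE.

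The decisive structural point, which I would isolate as a small lemma, is that whenever a walker is strictly ahead of the other one (at a position larger than the other walker's running maximum) the site it occupies has never been visited by the partner, so its transition probability there is exactly $\tfrac12$. Consequently a walker experiences the drift $p$ at a site $z$ only after the partner has visited $z$ at least twice, and in particular only at sites the partner reached \emph{before} it did. Call such a site \emph{prepared}. By the symmetry of the model under interchanging the two walkers, the partner reaches any given fresh site first with probability at most $\tfrac12$, so each site should be prepared with conditional probability at most $\tfrac12$ given the history to its left. Combined with the regeneration structure of Section \ref{subseq:regtime}, which makes the relevant quantities depend only on the environment to the right of the current front, this should yield that the field of prepared sites seen by $\{X_n\}$ is stochastically dominated by an i.i.d.\ Bernoulli$(\tfrac12)$ field.

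I would then finish exactly as in the lower bound: $\{X_n\}$ runs in an environment that is pointwise dominated from above by an i.i.d.\ environment with density $\tfrac12$ of $p$-sites, and by monotonicity of the speed in the environment (the hitting-time monotonicity of Zerner quoted in the introduction) its speed is at most that of the dominating RWRE, namely $\frac{2p-1}{2p+1}$. Feeding this quenched comparison into the regeneration representation \eqref{eq:speedregrepresentation} converts it into the stated bound on $v$.

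The main obstacle is the domination step. Unlike in Lemma \ref{lem:environmentdef}, where one builds a random environment from a \emph{subset} of the doubly eaten sites and only needs a lower bound, here the environment governing $\{X_n\}$ is revealed dynamically and is correlated with the walk, so ``prepared with probability at most $\tfrac12$'' does not by itself deliver a genuine RWRE upper bound. The work is to make the coupling with an i.i.d.\ Bernoulli$(\tfrac12)$ field precise --- showing that conditioning on the trajectory to the left of the front never raises the preparation probability above $\tfrac12$ --- and to justify that speed monotonicity may legitimately be applied to this dynamically generated, dominated environment. I expect the regeneration decomposition to be what makes this rigorous, by reducing the comparison to independent blocks in which the partner's head start over a fresh site is genuinely symmetric.
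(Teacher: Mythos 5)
Your opening observation is correct arithmetic --- $\frac{1}{1+\frac{2}{2p-1}}=\frac{2p-1}{2p+1}$ is indeed the speed, via Theorem \ref{thm:rwrespeed}, of the i.i.d.\ RWRE with a $p$-site of density $\frac12$ --- but the coincidence is misleading, and the domination claim on which your whole plan rests is false, not merely unproven. First, preparation of a site $z$ for $X$ does not require that $Y$ reach $z$ first: $X$ may visit $z$, wander off, and return after $Y$ has visited $z$ twice, so the ``who arrives first'' symmetry controls nothing. Second, and fatally, the density of prepared sites is typically much larger than $\frac12$. A walker's right-step probability never exceeds $p$, so it is dominated by the homogeneous walk with drift $p$, whose probability of never returning to a given site of its range is $2p-1$; hence the asymptotic density of sites that $Y$ visits at least twice --- all of which are eventually prepared for $X$ --- is at least $2(1-p)$, which exceeds $\frac12$ for every $p<\frac34$ and tends to $1$ as $p\downarrow\frac12$. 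No coupling can dominate this field by a Bernoulli$(\frac12)$ field.

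Moreover, even with the correct density $\lambda$ of prepared sites in hand, a pure environment-comparison argument cannot reach the stated bound: the speed of the i.i.d.\ RWRE with density $\lambda$ of $p$-sites is $\frac{\lambda(2p-1)}{2p-\lambda(2p-1)}\le 2p-1$, and near $p=\frac12$ (where $\lambda\to1$) this is roughly twice $\frac{2p-1}{2p+1}$. The ingredient you are missing is that preparation is costly to the preparer: leaving a drift at a site forces the preparing walker to visit it at least twice, i.e.\ to spend at least two extra steps per prepared site, and since both walkers have the same speed $v$ this gives a second constraint $v\le 1-2\lambda$ on the per-block drift density $\lambda$. The paper's proof combines this with the comparison bound $v\le(2p-1)\lambda$ (applied, via Theorem 1.3 of \cite{procaccia2012need}, to the static environment of drifts left by $X$ after the next regeneration time, which dominates what $Y$ actually sees) and then takes the worst case over $\lambda$: the quantity $\max_\lambda\min\{(2p-1)\lambda,\,1-2\lambda\}$ is attained at $\lambda=\frac{1}{2p+1}$ and equals $\frac{2p-1}{2p+1}$. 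That this optimum happens to coincide with the speed of the density-$\frac12$ RWRE is an accident of the algebra, not evidence that the relevant density is $\frac12$.
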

\begin{proof}
Let $\nu_i$ be the number of drifts $\{X_n\}$ left in the interval $X_{\tau_{i+1}}-X_{\tau_i}$ after time $\tau_{i+1}$ and $\om\in\{0,1\}^\BZ$ be 1 in the positions where $\{X_n\}$ left a drift and $0$ elsewhere. By the discussion above $\nu_i$ are i.i.d. Let $\lambda=\lim_{i\rightarrow\infty}\frac{\nu_i}{i}$. Note that $Y_n$ has a smaller drift at any position than the drift left by $X_n$ after the next regeneration time. Thus the speed of $Y_n$ which equals the speed of $X_n$ is smaller than the speed of a RWRE on the environment $\om$. By \cite{procaccia2012need} Theorem 1.3, $v\le(2p-1)\lambda$. But each drift left requires at least two moves, thus $v\le1-2\lambda$. Combining the two inequalities we get the proposition.

\end{proof}
\section{Markov approximation}
In this section we present a family of Markov processes whose speed approximates the speed of MERW, and has a simple yet hard to calculate representation.
\subsection{The model for the $k$'th process}
Consider two MERW with transition probabilities as in \eqref{eq:modelrule} truncated at $R_n-k$, i.e if $X_n$ or $Y_n$ reach a distance $k$ (even) from the right front, they turn right with probability one.
The state space of this Markov process $Z_n^k$ is contained in $\{0,1,2\}^{2k}\times\{0,...,k\}^2$. In each of the $k$ positions, one has to know the number of times $X_n$ and $Y_n$ visited and one has to know the current position of the two walks. Given that knowledge, the next state is calculated by \eqref{eq:modelrule} and the constraint. The environment changes according to the movement of the walks. As an example consider the case $k=2$, denote by a square one process and by a circle the other.

\begin{figure}[H]
\begin{center}
\includegraphics[width=0.6\textwidth]{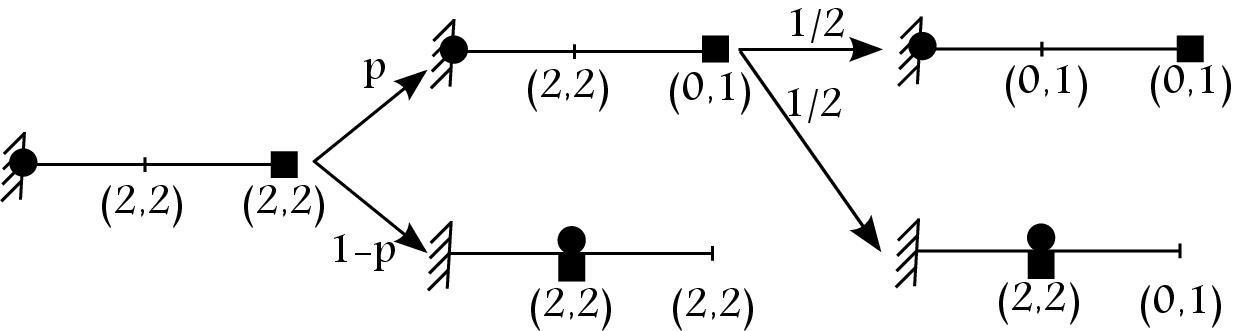}
\end{center}
\end{figure}

Next we prove convergence of the Markov chain speed to the MERW speed. By \eqref{eq:speedregrepresentation}, it is enough to control the change of regeneration time and distance expectation.
\begin{lem}\label{lem:badreg}
Let $A_k=\{\exists \tau_1\leq n\leq \tau_2:X_n<R_n-k\}$, then $\pr(A_k)\underset{k\rightarrow\infty}{\longrightarrow}0$.
\end{lem}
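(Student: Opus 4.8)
The plan is to show that within a single regeneration block the walk $X_n$ cannot fall arbitrarily far behind the right front, by combining the tightness estimates of Lemma \ref{lem:sqrt} with an integrable bound on the block length. Here $\tau_1,\tau_2$ are consecutive regeneration times and every index appearing in $A_k$ lies in $[\tau_1,\tau_2]$, so I first split on the length of the block. Fixing a threshold $m=m(k)\to\infty$ (concretely $m=k^{3/2}$), I write
\[
\pr(A_k)\le \pr\big(A_k,\ \tau_2\le m\big)+\pr(\tau_2>m).
\]
The second term is handled by the regeneration theory already established: the corollary following Theorem \ref{thm:regtime} gives $\ev[\tau]<\infty$ for a regeneration time, and together with the i.i.d.\ structure of Lemma \ref{lem:regiid} this yields $\ev[\tau_2]<\infty$, so Markov's inequality gives $\pr(\tau_2>m)\le \ev[\tau_2]/m\to 0$. (A quantitative rate, if wanted, follows instead from the stretched-exponential bound $\pr(\cap_{j=1}^m V^c(j))<e^{-m^\xi}$ of Theorem \ref{thm:regtime}.)

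For the first term I reduce the front–walk gap to the quantities controlled by Lemma \ref{lem:sqrt}. Writing $R_n=\max\{M_X(n),M_Y(n)\}$ and exploiting the symmetric roles of $X$ and $Y$, one has in all cases
\[
R_n-X_n\le \big(M_X(n)-X_n\big)+\big(M_Y(n)-Y_n\big)+|Y_n-X_n|,
\]
so by Lemma \ref{lem:sqrt} applied to $X$, and by symmetry to $Y$, there is a constant $C$ with $\pr\big(R_n-X_n>3n^{0.6}\big)<Ce^{-n^{0.1}}$ for all large $n$. With $m=k^{3/2}$ we have $3m^{0.6}=3k^{0.9}<k$ for $k$ large, hence for every $n\le m$ the event $\{R_n-X_n>k\}$ forces $\{R_n-X_n>3n^{0.6}\}$. (Morally, $R_n-X_n$ is a backtracking quantity governed by the intersection environment of Lemma \ref{lem:environmentdef}, which is exactly the content packaged into Lemma \ref{lem:sqrt}.)

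It then remains to take a union bound over $n\le m$. Since $R_n-X_n\le 2n$ deterministically, the event $\{R_n-X_n>k\}$ is empty unless $n\ge k/2$, so only the indices $k/2\le n\le m$ contribute, and for these the per-index probability is at most $Ce^{-(k/2)^{0.1}}$. Therefore
\[
\pr\big(A_k,\ \tau_2\le m\big)\le \sum_{k/2\le n\le m}\pr\big(R_n-X_n>3n^{0.6}\big)\le m\,C e^{-(k/2)^{0.1}}=Ck^{3/2}e^{-(k/2)^{0.1}},
\]
which tends to $0$. Combining the two estimates gives $\pr(A_k)\to 0$.

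The only delicate point is the balancing of the threshold $m$: it must grow slowly enough that $3m^{0.6}<k$ (so the gap bound of Lemma \ref{lem:sqrt} is available on the \emph{whole} block) yet fast enough that $\pr(\tau_2>m)\to 0$; the window $m=k^{3/2}$ satisfies both since $5/3>3/2$. Everything else is a routine union bound of a polynomially long block against the stretched-exponential tails supplied by Lemma \ref{lem:sqrt}.
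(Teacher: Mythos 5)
Your proof is correct, but it takes a genuinely different route from the paper's. The paper's argument is a two-line deterministic containment: if $X_n<R_n-k$ for some $n\in[\tau_1,\tau_2]$, then since $\tau_2$ is a fresh epoch one has $X_{\tau_2}=R(\tau_2)\geq R_n>X_n+k$, so $X$ must take more than $k$ further steps before $\tau_2$; hence $A_k\subset\{\tau_2-\tau_1\geq k\}$, and the tail bound on regeneration gaps coming from Theorem \ref{thm:regtime} finishes the proof --- no appeal to Lemma \ref{lem:sqrt} at all. You instead split on the block length $m=k^{3/2}$, dispose of the long-block part by Markov's inequality, and handle the short-block part by a union bound against the stretched-exponential tightness estimates of Lemma \ref{lem:sqrt}; your reduction $R_n-X_n\leq(M_X(n)-X_n)+(M_Y(n)-Y_n)+|Y_n-X_n|$, the restriction to $n\geq k/2$ via $R_n-X_n\leq 2n$, and the exponent bookkeeping $3/2<5/3$ are all sound. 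The trade-off: the paper's proof is shorter and isolates the one relevant observation (lagging behind the front forces a long block), and combined with the stretched-exponential tail of $\tau_2-\tau_1$ it yields a faster rate; your proof is heavier but entirely explicit about the rate it does achieve ($O(k^{-3/2})$ from the Markov step, improvable as you note). Both arguments share the same small gap as the paper itself: Theorem \ref{thm:regtime} literally controls only the first regeneration time, and transferring the tail (or the finiteness of $\ev[\tau_2-\tau_1]$) to the second block uses the restart structure of Lemma \ref{lem:regiid}, which you invoke correctly.
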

\begin{proof}
Let $B_k=\{\tau_2-\tau_1\geq k\}$, from Theorem \ref{thm:regtime}, $\pr(B_k)\underset{k\rightarrow\infty}{\longrightarrow}0$, moreover for $p\in[p_1,p_2]$ for some $1/2<p_1<p_2<1$ the convergence is uniform. If $X_n<R_n-k$ it will take more than $k$ steps for $X_n$ to reach a fresh point. Thus $A_k\subset B_k$ and $\pr(A_k)\underset{k\rightarrow\infty}{\longrightarrow}0$.
\end{proof}

\begin{lem}\label{lem:unconv}
Let $v_k$ be the speed of the $k$'th Markov process. For all $1/2<p_l\leq p\leq p_h<1$, $\lim_{k\rightarrow\infty}v_k(p)=v(p)$ uniformly.
\end{lem}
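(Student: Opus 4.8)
The plan is to use the regeneration representation \eqref{eq:speedregrepresentation} for \emph{both} processes and to couple them so that they agree until the first moment a walk falls more than $k$ behind the right front. First I would observe that the truncated chain $Z^k_n$ possesses regeneration times in the sense of Definition \ref{def:regtime} and, by the argument of Lemma \ref{lem:regiid}, i.i.d.\ regeneration increments; hence its speed may be written $v_k=\ev[S_k]/\ev[D_k]$, where $D_k$ and $S_k$ denote a regeneration increment in time and in space of $Z^k$, while $v=\ev[S]/\ev[D]$ for the MERW with $D=\tau_2-\tau_1$ and $S=X_{\tau_2}-X_{\tau_1}$. Since both $D\ge 1$ and $D_k\ge 1$, the denominators are bounded away from zero, so it suffices to show $\ev[D_k]\to\ev[D]$ and $\ev[S_k]\to\ev[S]$.

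Next I would introduce the natural coupling in which the MERW and $Z^k$, both started from a common regeneration configuration, are driven by the same coin flips. As long as neither walk is more than $k$ behind $R_n$, the truncation rule defining $Z^k$ is never invoked, so the two trajectories are identical throughout the first regeneration cycle. Consequently, on the complement of the event $A_k=\{\exists\,\tau_1\le n\le\tau_2:X_n<R_n-k\}$ of Lemma \ref{lem:badreg}, the first regeneration increment of $Z^k$ equals that of the MERW, i.e.\ $(D_k,S_k)=(D,S)$ on $A_k^c$. This yields
\[
|\ev[D]-\ev[D_k]|\le \ev[D\ind_{A_k}]+\ev[D_k\ind_{A_k}],
\]
and, using $|S|\le D$ and $|S_k|\le D_k$, the same bound for the numerators.

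It then remains to show each term tends to zero. By Cauchy--Schwarz, $\ev[D\ind_{A_k}]\le\sqrt{\ev[D^2]\,\pr(A_k)}$; here $\ev[D^2]<\infty$ by the stretched-exponential tail of Theorem \ref{thm:regtime}, and $\pr(A_k)\to0$ by Lemma \ref{lem:badreg}, so this term vanishes, uniformly in $p\in[p_l,p_h]$ since both inputs are uniform there. For the term $\ev[D_k\ind_{A_k}]$ I would argue that the entire regeneration machinery of Section \ref{subseq:regtime} applies verbatim to $Z^k$ with constants independent of $k$: imposing the truncation only forces the walks rightward, so the intersection environment of $Z^k$ still dominates the i.i.d.\ environment of Lemma \ref{lem:environmentdef}, the fronts are pushed at least as hard, and the proof of Theorem \ref{thm:regtime} therefore produces a bound $\pr(D_k>m)<e^{-m^\xi}$ holding uniformly in $k$ (and in $p$). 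This gives a uniform bound on $\ev[D_k^2]$, whence $\ev[D_k\ind_{A_k}]\le\sqrt{\ev[D_k^2]\,\pr(A_k)}\to0$. Combining the estimates gives $\ev[D_k]\to\ev[D]$ and $\ev[S_k]\to\ev[S]$, hence $v_k\to v$ uniformly on $[p_l,p_h]$.

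The main obstacle is precisely this uniform-in-$k$ moment control on $D_k$: one must check that the truncation degrades none of the domination and concentration estimates of Section \ref{subseq:regtime}, so that the regeneration tail bound survives with $k$-independent constants. Once this monotonicity-under-truncation is established, everything else is a routine uniform-integrability argument built on the coupling and on $\pr(A_k)\to0$.
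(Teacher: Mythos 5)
Your proposal is correct and follows essentially the same route as the paper: the same coupling of the MERW with $Z^k$ so that the regeneration increments coincide on $A_k^c$, the same decomposition of numerator and denominator over $A_k$ versus $A_k^c$, and the same Cauchy--Schwarz plus Lemma \ref{lem:badreg} argument to kill the error terms. If anything you are more careful than the paper, which needs $\ev[(\sigma_2-\sigma_1)\ind_{A_k}]\to 0$ for the truncated process but justifies only the untruncated analogue, whereas you explicitly isolate the required uniform-in-$k$ second-moment bound on the truncated regeneration time and sketch why truncation preserves the estimates of Section \ref{subseq:regtime}.
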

\begin{proof}
We calculate the speed change caused by truncation, by conditioning on the event $A_k$.
\bae\label{eq:errorexp}
\ev[(\tau_2-\tau_1)\ind_{A_k}]&\leq\sqrt{\ev[(\tau_2-\tau_1)^2]}\sqrt{\ev[\ind_{A_k}^2]}\\
&=\sqrt{\pr(A_k)}\sqrt{\ev[(\tau_2-\tau_1)^2]}\underset{k\rightarrow\infty}{\longrightarrow}0.
\eae
Where the inequality is by Cauchy-Schwarz and the limit is due to Lemma \ref{lem:badreg}. Let $Z_n$ be one of the truncated MERW, $\sigma_n$ its regeneration times and $X_n$ be one of the MERW,
\bae\label{eq:vkcalc}
v_k&=\frac{\ev[Z_{\sigma_2}-Z_{\sigma_1}]}{\ev[\sigma_2-\sigma_1]}\\
&=\frac{\ev[(Z_{\sigma_2}-Z_{\sigma_1})\ind_{A_k}]+\ev[(Z_{\sigma_2}-Z_{\sigma_1})\ind_{A_k^c}]}{\ev[(\sigma_2-\sigma_1)\ind_{A_k}]+\ev[(\sigma_2-\sigma_1)\ind_{A_k^c}]}\\
&=\frac{\ev[(Z_{\sigma_2}-Z_{\sigma_1})\ind_{A_k}]+\ev[(X_{\tau_2}-X_{\tau_1})\ind_{A_k^c}]}{\ev[(\sigma_2-\sigma_1)\ind_{A_k}]+\ev[(\tau_2-\tau_1)\ind_{A_k^c}]}\\
&=\frac{\ev[X_{\tau_2}-X_{\tau_1}]-\ev[(X_{\tau_2}-X_{\tau_1})\ind_{A_k}]+\ev[(Z_{\sigma_2}-Z_{\sigma_1})\ind_{A_k}]}{\ev[\tau_2-\tau_1]-\ev[(\tau_2-\tau_1)\ind_{A_k}]+\ev[(\sigma_2-\sigma_1)\ind_{A_k}]}
.\eae
Let
\bae\label{eq:varepsilonlimit}
\varepsilon_k=\max\{2\ev[(\tau_2-\tau_1)\ind_{A_k}],2\ev[(\sigma_2-\sigma_1)\ind_{A_k}]\}
\eae
By \eqref{eq:errorexp} $\lim_{k\rightarrow\infty}\varepsilon_k=0$. Combining \eqref{eq:vkcalc} and \eqref{eq:varepsilonlimit} yields,
\bae
v_k&\leq\frac{\ev[X_{\tau_2}-X_{\tau_1}]+\varepsilon_k}{\ev[\tau_2-\tau_1]-\varepsilon_k}=\frac{\ev[X_{\tau_2}-X_{\tau_1}]}{\ev[\tau_2-\tau_1]}\frac{1}{1-\frac{\varepsilon_k}{\ev[\tau_2-\tau_1]}}
+\frac{\varepsilon_k}{\ev[\tau_2-\tau_1]-\varepsilon_k}\\
&\leq \frac{\ev[X_{\tau_2}-X_{\tau_1}]}{\ev[\tau_2-\tau_1]}\left(1+\frac{2\varepsilon_k}{\ev[\tau_2-\tau_1]}\right)+\frac{\varepsilon_k}{\ev[\tau_2-\tau_1]-\varepsilon_k}\\
&=v+2\varepsilon_k\ev[X_{\tau_2}-X_{\tau_1}]+\frac{\varepsilon_k}{\ev[\tau_2-\tau_1]-\varepsilon_k}\underset{k\rightarrow\infty}{\longrightarrow}v.
\eae
For the lower bound
\bae
v_k&\geq\frac{\ev[X_{\tau_2}-X_{\tau_1}]-\varepsilon_k}{\ev[\tau_2-\tau_1]+\varepsilon_k}=\frac{\ev[X_{\tau_2}-X_{\tau_1}]}{\ev[\tau_2-\tau_1]+\varepsilon_k}
-\frac{\varepsilon_k}{\ev[\tau_2-\tau_1]+\varepsilon_k}\\
&=\frac{\ev[X_{\tau_2}-X_{\tau_1}]}{\ev[\tau_2-\tau_1]}\frac{1}{1+\frac{\varepsilon_k}{\ev[\tau_2-\tau_1]}}
-\frac{\varepsilon_k}{\ev[\tau_2-\tau_1]+\varepsilon_k}\underset{k\rightarrow\infty}{\longrightarrow}v.
\eae
\end{proof}

\subsection{What do we get from the Markov approximation?}
In this section we show some results one can obtain using the Markov approximation tool.
\begin{lem}\label{lem:vkcon}
The speed $v_k(p)$ is continuous on the interval $\left[\frac{1}{2},1\right]$.
\end{lem}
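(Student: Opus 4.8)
The plan is to exploit that, viewed relative to the right front, the $k$-th process $Z^k_n$ is a genuine \emph{finite}-state Markov chain, and that its transition probabilities are affine functions of $p$. Concretely, encode the state at time $n$ by the positions of the two walkers relative to $R_n$ together with the visit-profile in $\{0,1,2\}^{2k}$ of the sites behind the front; this lives in the fixed finite set contained in $\{0,1,2\}^{2k}\times\{0,\ldots,k\}^2$ described above and, since the dynamics \eqref{eq:modelrule} together with the reflection at $R_n-k$ are translation invariant, ``configuration seen from the front'' is a time-homogeneous Markov chain whose transition matrix $P(p)$ has every entry equal to one of $1/2,\,p,\,1-p,\,0,\,1$. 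In particular $p\mapsto P(p)$ is affine, and a fortiori continuous, on all of $[1/2,1]$.

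First I would record the regeneration representation for the truncated process. Because truncation only pushes the walkers to the right, the $k$-th process is transient and admits regeneration times $\sigma_1<\sigma_2<\cdots$ as in Definition \ref{def:regtime}; repeating the arguments of Lemma \ref{lem:prob_reg} and Theorem \ref{thm:regtime} for the truncated dynamics gives that $\sigma_2-\sigma_1$ has stretched-exponential tails and, as in \eqref{eq:speedregrepresentation} and \eqref{eq:vkcalc},
\[
v_k(p)=\frac{\ev_p[Z_{\sigma_2}-Z_{\sigma_1}]}{\ev_p[\sigma_2-\sigma_1]}.
\]
The denominator is at least $1$, so it suffices to prove that $p\mapsto\ev_p[Z_{\sigma_2}-Z_{\sigma_1}]$ and $p\mapsto\ev_p[\sigma_2-\sigma_1]$ are continuous on $[1/2,1]$.

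For the continuity I would write each expectation as a sum over finite trajectories of the chain. For a fixed path of length $m$, its probability under $\pr_p$ is a product of $m$ transition entries, hence a polynomial in $p$, and the events $\{\sigma_2-\sigma_1=m\}$ and $\{Z_{\sigma_2}-Z_{\sigma_1}=j\}$ are determined by finitely many such paths; thus the partial sums $\sum_{m\le N}(\cdots)$ are polynomials in $p$, in particular continuous. The tails $\ev_p[(\sigma_2-\sigma_1)\ind_{\{\sigma_2-\sigma_1>N\}}]$, together with the analogous displacement tail which is bounded by it since $|Z_{\sigma_2}-Z_{\sigma_1}|\le\sigma_2-\sigma_1$, must be shown to vanish as $N\to\infty$ \emph{uniformly in} $p$; granting this, both expectations are uniform limits of continuous functions, hence continuous, and the quotient $v_k(p)$ is continuous on $[1/2,1]$.

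The main obstacle is exactly this uniform tail estimate up to and including the endpoints $p=\tfrac12$ and $p=1$, where the regeneration bounds of Theorem \ref{thm:regtime} were stated only on compact subintervals of $(1/2,1)$ (cf. the uniform convergence invoked in Lemma \ref{lem:badreg}). The point to establish is that for every $p\in[1/2,1]$ the front advances at a rate bounded below by a constant $c_k>0$ independent of $p$: even in the least favourable case $p=\tfrac12$ the walkers are two reflected simple random walks confined to a window of width $k$ behind the front, which already carries the front forward at positive speed, and increasing $p$ only increases the push. Because the state space is finite and $P(p)$ depends continuously on $p$ over the compact set $[1/2,1]$, this uniform lower bound on the front speed yields a uniform geometric tail for $\sigma_2-\sigma_1$, completing the argument. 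Equivalently, on $(1/2,1)$ the chain is irreducible on its recurrent class and its stationary distribution is, by Cramer's rule, a rational function of $p$ with strictly positive denominator, so $v_k$ is rational and continuous there; continuity at the two endpoints then follows from the uniform tail bound, giving continuity on the closed interval $[1/2,1]$.
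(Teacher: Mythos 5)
Your argument is correct in outline but takes a genuinely different route from the paper. The paper's proof is a one-liner by comparison: writing $D_n^k$ for the net displacement, it uses the ergodic theorem for the finite-state chain ``configuration seen from the front'' to express the speed as $v_k=\sum_y\pi(y)\sigma(y)$, a \emph{finite} sum in which each local drift $\sigma(y)$ is manifestly polynomial in $p$ and the stationary distribution $\pi$ depends continuously on the (polynomial-in-$p$) transition matrix; continuity on all of $\left[\frac12,1\right]$ follows at once, with no regeneration structure and no tail estimates. You instead go through the regeneration representation $v_k=\ev_p[Z_{\sigma_2}-Z_{\sigma_1}]/\ev_p[\sigma_2-\sigma_1]$, approximate by polynomial partial sums over finite trajectories, and reduce everything to a tail bound on $\sigma_2-\sigma_1$ that is uniform in $p$ up to the endpoints. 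That route works, but it is heavier and the uniform tail bound is where the real work hides: a uniform lower bound on the speed of the front is not by itself enough — you also need $\inf_{p\in[1/2,1]}\pr_p(\text{a fresh epoch is a regeneration})>0$ for the \emph{truncated} chain, which requires redoing the argument of Lemma \ref{lem:prob_reg} using the fact that the walkers cannot lag more than $k$ behind the advancing front; you gesture at this but should state it as the quantity being bounded. What your approach buys is robustness (it would survive if the state space were not finite) and, incidentally, it handles the endpoint $p=1$ explicitly, whereas the paper's appeal to continuity of $\pi(y)$ in $p$ silently assumes the chain retains a unique relevant stationary distribution there. Your closing remark about Cramer's rule is in fact essentially the paper's proof; had you led with it, the regeneration machinery would have been unnecessary on the whole closed interval.
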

\begin{proof}
Let $D_n^k$ be the number of steps $X_n^k$ turned to the right minus the number of left turns. We can write the speed as $v_k=\sum_y\pi(y)\sigma(y)$, where $\pi(\cdot)$ is the stationary distribution and $\sigma(y)$ is the drift $X^{k}_n$ has in state $y$. To see this
\bae
v_k&=\lim_{n\rightarrow\infty}\frac{D_n^k}{n}=\lim_{n\rightarrow\infty}\frac{\ev\left[D_n^k\right]}{n}=\lim_{n\rightarrow\infty}\frac{1}{n}\sum_{i=0}^{n-1}\ev\left[D_{n+1}^k-D_n^k\right]\\
&=\lim_{n\rightarrow\infty}\frac{1}{n}\sum_{i=0}^{n-1}\sigma(X_n^k)=\sum_y\pi(y)\sigma(y).
\eae
Both $\sigma(y)$ and $\pi(y)$ are continuous function of $p$, since the sum is over a finite space state, $v_k$ is a continues function of $p$.
\end{proof}
\begin{thm}
The speed $v(p)$ is continuous on the interval $\left[\frac{1}{2},1\right)$.
\end{thm}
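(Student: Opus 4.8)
The plan is to deduce continuity of $v$ from the two properties of the truncated speeds $v_k$ established above: each $v_k$ is continuous on $[1/2,1]$ (Lemma \ref{lem:vkcon}), and $v_k \to v$ uniformly on every compact subinterval of $(1/2,1)$ (Lemma \ref{lem:unconv}). On the open interval this is immediate from the classical fact that a uniform limit of continuous functions is continuous. Concretely, I would fix $1/2 < p_l < p_h < 1$ and, given $\varepsilon>0$, use Lemma \ref{lem:unconv} to choose $k$ with $\sup_{[p_l,p_h]}|v_k-v|<\varepsilon/3$; since $v_k$ is continuous by Lemma \ref{lem:vkcon}, a standard $\varepsilon/3$ estimate yields continuity of $v$ on $[p_l,p_h]$. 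As the intervals $[p_l,p_h]$ exhaust $(1/2,1)$, this already gives continuity of $v$ throughout the open interval $(1/2,1)$.

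It remains to establish right-continuity at the endpoint $p=1/2$. At $p=1/2$ the cookie environment is identically $1/2$, so $X_n$ and $Y_n$ are independent simple symmetric random walks and $v(1/2)=0$. To show $v(p)\to 0$ as $p\downarrow 1/2$ I would invoke the deterministic upper bound from the Proposition preceding this section, $v(p)\le \frac{1}{1+\frac{2}{2p-1}}=\frac{2p-1}{2p+1}$, together with the positivity $v(p)>0$ on $(1/2,1)$ from Theorem \ref{thm:converspeed}. Since $\frac{2p-1}{2p+1}\to 0$ as $p\downarrow 1/2$, the squeeze $0\le v(p)\le \frac{2p-1}{2p+1}$ forces $\lim_{p\downarrow 1/2}v(p)=0=v(1/2)$, which is exactly the required right-continuity at the left endpoint.

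The main obstacle is precisely this endpoint $p=1/2$: the uniform convergence of Lemma \ref{lem:unconv} is asserted only on compact subintervals bounded away from $1/2$, and it cannot be pushed into a neighborhood of $1/2$, since as $p\downarrow 1/2$ the walk slows down, regeneration times become rare and their tails heavier, so the truncation error ceases to be uniform. Hence the clean ``uniform limit of continuous functions'' argument is unavailable at $1/2$, and one is forced to obtain continuity there by the independent squeeze above rather than through the Markov approximation. The interior portion of the argument is entirely routine; the only genuine content lies in recognizing that the boundary case must be handled separately and in supplying the correct two-sided estimate for it.
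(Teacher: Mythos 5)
Your proposal is correct and follows essentially the same route as the paper: continuity on $(\tfrac{1}{2},1)$ from uniform convergence of the continuous $v_k$ (Lemmas \ref{lem:unconv} and \ref{lem:vkcon}), and right-continuity at $\tfrac{1}{2}$ by squeezing $v(p)$ between $0$ and an upper bound vanishing as $p\downarrow\tfrac{1}{2}$. The only cosmetic difference is that the paper uses the bound $v(p)\le 2p-1$ while you invoke the preceding Proposition's bound $v(p)\le\frac{2p-1}{2p+1}$; both serve equally well.
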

\begin{proof}
The continuity in the interval $(\frac{1}{2},1)$ is an easy conclusion from the uniform convergence of $v_k$ to $v$ derived in Lemma \ref{lem:unconv} and the continuity of $v_k$ is proved in lemma \ref{lem:vkcon}. For every $p$ we have an upper bound for the speed, $v(p)\leq 2p-1$. Taking the limit $p\rightarrow \frac{1}{2}$ we obtain continuity at $\frac{1}{2}$,  \[0\leq v\left(\frac{1}{2}\right)\leq \lim_{p\rightarrow\frac{1}{2}}2p-1=0.\]
\end{proof}
\subsection{Deterministic approximation of the speed}
Let $P_k(x,y)$ be the transition matrix of $Z^k_n$. It is easy to see that
\[\lim_{m\rightarrow\infty}\max_{x}\left| \sum_y\pr^m_k(x,y)\sigma(y)-v_k\right|=0.\]
This provides us with a deterministic algorithm  to calculate $v_k$. Given this calculation, by Lemma \ref{lem:unconv} we can find $p_1<p_2$ and $k$ large enough such that $v_k(p_1)>v_k(p_2)$ and $|v(p)-v_k(p)|<\frac{v_k(p_1)-v_k(p_2)}{10}$ for all $p\in[p_1,p_2]$ and thus attain the non-monotonicity of $v(p)$. We were not able to make the proposed calculations as its complexity is too high for today's computers.

\section{Open questions}
\begin{enumerate}
\item The base of our technique was to use RWRE, this only works for $p<1$. One important question that eludes us is: Is $v(p)$ continuous at $p=1$.
\item Consider a generalization of the MERW model, MERW$(m)$. $m$ is the initial number of symmetric cookies in each site. That is
\[
{\omega_1(z)\choose\omega_2(z)}={(\overset{m~{\rm times}}{\overbrace{1/2,\ldots,1/2}},p,p,p,...)\choose
(\underset{m~{\rm times}}{\underbrace{1/2,\ldots,1/2}},p,p,p,...)}
.\]
An interesting conjecture arise from simulations (See Figure \ref{fig:merw357}). For $m$ large enough $v_m(p)$ is monotone, where $v_m(p)$ is the speed of MERW$(m)$.

\begin{figure}[H]
\begin{center}
\includegraphics[width=0.6\textwidth]{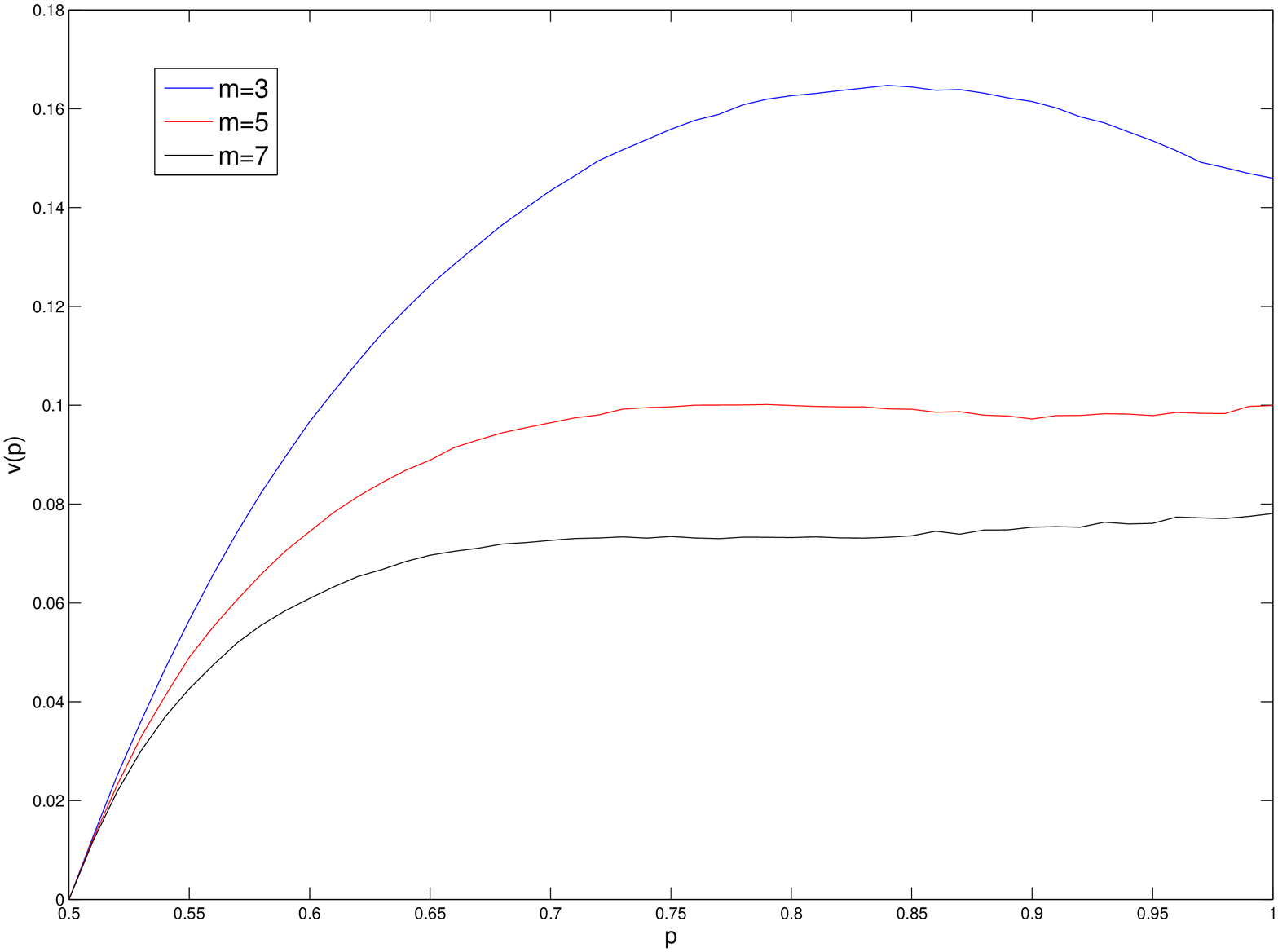}
\caption{Speed of MERW$(m)$, $m=3,5,7$\label{fig:merw357}}
\end{center}
\end{figure}
\end{enumerate}

\bibliography{ttp}
\bibliographystyle{alpha}

\end{document}